\numberwithin{equation}{section}
\newtheorem{lemma}{Lemma}
\newtheorem{theorem}{Theorem}
\newtheorem{definition}{Definition}
\newtheorem{corollary}{Corollary}
\newtheorem{proposition}{Proposition}
\newtheorem{remark}{Remark}
\newcommand{\R}{\mathbb{R}}
\newcommand{\K}{K}
\newcommand{\KR}{\widetilde{K}}
\newcommand{\Q}{\mathbb{Q}}
\newcommand{\im}{{\rm im}}
\newcommand{\dom}{{\rm dom}}
\newcommand{\Dyad}{{\rm Dyad}}
\newcommand{\Pt}{{\rm P}}
\newcommand{\SPt}{{\rm S}\Pt}
\newcommand{\AC}{{\rm AC}}
\newcommand{\Tot}{{\rm Tot}}
\newcommand \dotmm {\,{-}\hspace*{-6pt}\parbox{3pt}{\vspace*{-6pt}.}\hspace*{3pt}\,}
\newcommand{\dotminus}{\dotmm}
\newcommand{\BF}{{\rm BF}}
\newcommand{\Sp}{{\rm Sp}}
\begin{document}
\renewcommand{\refname}{References}
\renewcommand{\proofname}{Proof.}
\thispagestyle{empty}

\title[On the Computability of Ordered Fields]{On the Computability of Ordered Fields}

\author{{M.V.Korovina and O.V.Kudinov}}%
\address{Margarita Vladimirovna Korovina
\newline\hphantom{iii} A.P. Ershov Institute of Informatics Systems,
\newline\hphantom{iii} pr. Acad. Lavrentjev,6,
\newline\hphantom{iii} 630090, Novosibirsk, Russia}%
\email{rita.korovina@gmail.com}%
\address{Oleg Victorovich Kudinov
\newline\hphantom{iii} Sobolev Institute of Mathematics,
\newline\hphantom{iii} pr. Koptyug, 4,
\newline\hphantom{iii} 630090, Novosibirsk, Russia}%
\email{kud@math.nsc.ru}%

\thanks{\sc Korovina, M.V., Kudinov, O.V.,
On the Computability of Ordered Fields}
\thanks{\copyright \ 2020 Korovina M.V., Kudinov O.V}
\thanks{\rm O.V.Kudinov was  supported  by   the state contract
of the Sobolev Institute of Mathematics (project no. 0314-2019-0002)  and RFBR  project no. 20-01-00300a  and
 M.V.Korovina was    supported  by the state contract
of the IIS SBRAS (project no. 0317-2019-0003) and RFBR- JSPS
project no. 20-51-5000.}
\thanks{\it Received   2020, published    2020}%

\maketitle {\small
\begin{quote}
\noindent{\sc Abstract. }
In this paper we develop  general techniques  for structures of computable real numbers generated by  classes of total computable (recursive) functions with special restrictions on basic operations in order to investigate the following problems:
whether a generated structure is a real closed field and whether there exists a computable copy of  a generated structure.
We prove a series of theorems that lead to the result that there are no  computable copies
neither for polynomial time computable no  even for $\mathcal{E}_n$-computable real numbers, where $\mathcal{E}_n$ is a level  in  Grzegorczyk hierarchy, $n\geq 2$.
We also  propose a criterion of computable presentability of an archimedean ordered field.

\noindent{\bf Keywords:}  computability, index set, computable model theory, computable analysis, complexity.
 \end{quote}
}

\section{Introduction}

\noindent In the framework of computable model theory there have been investigated
 condi\-tions on the existence of computable copies
  for countable homogeneous boolean algebras \cite{Morozov}, for superatomic boolean algebras \cite{Gonch,Gonch1}, for ordered abelian groups \cite{Khisamiev} among others and established several negative  results for archimedean ordered fields \cite{Miller,KK_spectr}.

 \noindent
 Nevertheless,  till now there where no natural criteria on the existence of computable copies of ordered fields even in an  archimedean case. In this paper we fill this gap.

 \noindent We  are also going dipper to revile  relations between a class of computable (recursive) functions $\K$ and a structure $\KR$ of
 real $K$--numbers generated by $\K$.
 We propose natural restrictions on  a class $\K$ under which the structure $\KR$ is a real closed field.



\noindent  In this direction we investigate whether there exist  computable copies of   generated structures for
 popular classes of computable functions such as the polynomial time computable functions $\Pt$ and
Grzegorczyk classes  $\mathcal{E}_n,\, n\geq 2$.
  We establish that the corresponding real closed fields do not have computable copies and moreover the polynomial time computable real numbers as an abelian group does not have a computable copy as well.

  In order to do that we develop  techniques of index sets and multiple $m$-complete\-ness.
   On this way we have to   establish
  a criterion of $m$-completeness for tuples of c.e. sets and $\Sigma^0_2$-sets. From our point of view this criterion is
an interesting result   itself and can be used for different purposes.





%
The paper is organised a follows:

\noindent Section~\ref{prelim_sec}  contains preliminaries and  basic background.

\noindent  In Section~\ref{rcfield_sec}  we show under which natural restrictions on $\K$ the corresponding generated structure $\KR$ is a real closed field.

\noindent In Section~\ref{criterion_aof}  we  propose a criterion of the computable presentability of an archimedean ordered field.

 \noindent In Section~\ref{index_sets_constructivisiruemih_kr}  we define 3-tuple of index sets $(A_0,A_1,A_2)$ depending on $\K$ such that
$A_i\in\Sigma^0_2$  with the following  embedding property $A_0\subseteq A_1\subseteq A_2$.
In the Theorem~\ref{sigma02construct} we show that if the corresponding $\KR$ as a structure, in particular as  an abelian group, has a computable copy
then $A_0\cup (A_2\setminus A_1)\in \Sigma^0_2$.

 \noindent In Sections~\ref{m-universality}, \ref{index_sets} and \ref{without_constr} we develop techniques to  establish that   under natural assumptions on a class $K$   the 3-tuple $(A_0,A_1,A_2)$ is $m$-complete in the class of 3-tuples of $\Sigma^0_2$-sets with the embedding property.
It is well-known that in this case $A_0\cup (A_2\setminus A_1)\not\in \Sigma^0_2$ and therefore for the corresponding
$\KR$ there is no a computable copy.
It is worth noting than these classes contain polynomial time computable real numbers, computable real numbers generated by Grzegorczyk classes, in  particular $\mathcal{E}_2$ and beyond.


%

\section{Preliminaries}\label{prelim_sec}
We refer the reader to \cite{Rogers} and  \cite{Soarehbook} for  basic definitions and fundamental concepts of recursion theory \cite{Ko,Weihrauchbook} for computable analysis, \cite{ErshGonch} for computable model theory, \cite{Grzegorczyk} for
Grzegorczyk classes of computable (recursive) functions and \cite{ALR} for complexity classes.
We recall that, in particular, $\varphi_e$ denotes the partial  computable (recursive) function with  an index $e$ in the Kleene numbering. For simplicity of descriptions  we identify a function with its graph. We also use notations $W_e=\rm {dom}(\varphi_e)$, $ \overline{W_e}=\omega\setminus W_e$,  $\pi_e=\rm {im}(\varphi_e)$ and  for Cantor 3-tuple $(c,l,r)$ we associate  a number $n$ with the corresponding pair $<l(n),r(n)>$. We fix the set $\BF$ of standard  basic functions $\lambda x.0,\,s(x)$ and
 $I^n_m$, where $I^n_m(x_1,\dots,x_n)=x_m$ for $1\leq m\leq n$ and denote the total  computable numerical functions as $\mathbb{T}$ and $\Tot=\{n\mid \varphi_n\in \mathbb{T}\}$.
We fix a standard computable numbering $q:\omega\to \mathbb{Q}$  of the rational numbers and when it is clear from a context we use  the notation $q_n$ for $q(n)$. For tuples $z_1\dots,z_k$ of numbers or functions  we use the notation $\bar{z}$ when
 it is clear from a context.
 For the dyadic numbers we use the notation $Dyad=\{\frac{m}{2^{i+1}}\mid m\in \mathbb{Z},\,i\geq 0\}$,
  $\mathbb{Q}^{+}=\{q\in\mathbb{Q}\mid q>0\}$
  and $B(\alpha,r)=\{x\in\R\mid |x-\alpha|<r\}$ for a basic open ball with the center $\alpha\in\mathbb{Q}$ and the radius $r\in \mathbb{Q}^{+}$.

\subsection{Primitive Computable Reals}\label{KR}
Let $\K$ be a class of total computable numerical functions  with  the following restrictions:
it   contains the basic functions $\BF$ and $+,\,\cdot,$    closed under composition and
the following  bounded primitive  recursion scheme:
if $\alpha\,\,g,\psi\in K$ and $f$ is defined by

  \begin{eqnarray*}
f(\bar{x},y)=\left \{
\begin{array}{lll}
\alpha(\bar{x})&\mbox{ if }& y=0\\
\psi(\bar{x},y,f(\bar{x},y-1)) &\mbox{ if }& y\geq 1
\end{array}
\right.
\end{eqnarray*}
 and

\begin{align*}
f(\bar{x},y)\leq g(\bar{x},y)
\end{align*}
then $f\in K$.

 Then $\KR$ denotes the set of computable real numbers
\[
\{x\mid (\exists \Phi\in\K)(\forall n>0)|x-q(\Phi(n))|\leq\frac{1}{2^n}
\}.
\]
 This is equivalent to the existence of $F\in\K$ s.th.
$(\forall n \geq 0)(\forall N>n)\,|a_N-a_n|\leq\frac{1}{2^n}\wedge \lim_{n\to\infty} a_n=x$, where
$a_n=q(F(n))$.
 If $\K$ contains  $\lambda x.2^x$ then an element of $\KR$ is called  a $\K$--number.
In particular,  if $\K$ is the class of all primitive computable functions then $\KR$ is called the  primitive computable real numbers. The same for $\mathcal{E}_n$, $n\geq 3$. For $\mathcal{E}_2$ and polynomial time computable reals see Section~\ref{index_sets_constructivisiruemih_kr}.
\begin{remark}
In Section~\ref{index_sets_constructivisiruemih_kr} we are going to consider not so rich classes as above. Therefore the~definition of $\KR$ will be modified. In particular, in Section~\ref{index_sets_constructivisiruemih_kr} and further the notion $x\in\KR$ will differ from  the notion $\K$-number.
\end{remark}

\subsection{ Computable Presentations}

 We say that a structure ${\mathcal A}=\left <A,\sigma\right >$ admits an  computable presentation  (copy) if there is a numbering $\nu:\omega\to A$ such that  the relations and operations  from $\sigma$ including equality
 are computable  with respect to the numbering $\nu$. The pair $({\mathcal A},\nu)$ is called a computable structure and the numbering $\nu$ is called its computable presentation (constructivisation).
If only operations are computable with respect to the numbering $\nu$, a structure  $({\mathcal A},\nu)$  is called a~numbered (effective) algebra.
\section{When $\KR$ is a real closed field}\label{rcfield_sec}
Let us fix $\K$ with the restrictions from Section~\ref{KR}.
\begin{proposition}
The corresponding structure $\KR=(\KR,+,\cdot,\leq)$ is a real closed field.
\end{proposition}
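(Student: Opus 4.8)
The plan is to establish three things: that $\KR$ is a subfield of $\R$ containing $\Q$; a Key Lemma to the effect that every \emph{simple} real root of a polynomial with coefficients in $\KR$ already lies in $\KR$; and then to invoke the Artin--Schreier characterisation, namely that an ordered field is real closed as soon as every positive element is a square and every polynomial of odd degree over the field has a root in the field. Since $\KR\subseteq\R$, the relation $\leq$ is the order inherited from $\R$; it is automatically compatible with the operations and makes $\KR$ formally real (no subfield of $\R$ can write $-1$ as a sum of squares), so the only substantive work is the closure of $\KR$ under the field operations and under extraction of the relevant roots.

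\textbf{Closure under the field operations.} Let $a,b\in\KR$ with witnesses $\Phi_a,\Phi_b\in\K$, so $|a-q(\Phi_a(n))|\leq 2^{-n}$ and likewise for $b$. One writes down explicit witnesses: $n\mapsto$ (an index of the rational $q(\Phi_a(n+1))+q(\Phi_b(n+1))$) witnesses $a+b$, and similarly for $a-b$ and $-a$; for $a\cdot b$ one shifts the argument by a fixed constant $t$ with $2^{t}\geq\max(|a|,|b|)+1$; for $a^{-1}$ (when $a\neq 0$) one fixes $k$ with $|a|\geq 2^{-k}$ and shifts the argument by $2k+1$, whence the relevant denominators stay bounded away from $0$. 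In every case the witness is obtained from $\Phi_a,\Phi_b$ by composition with the successor function and with the operations of addition, multiplication, reciprocal and comparison of rationals under the fixed numbering $q$, and the Cantor pairing; all of the latter are elementary and hence belong to $\K$, since $\K$ contains $\BF,+,\cdot$ and is closed under composition and bounded primitive recursion (so $\K\supseteq\mathcal{E}_2$). Constants lie in $\KR$ via constant index functions, so $\Q\subseteq\KR$ and $\KR$ is a subfield of $\R$.

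\textbf{Key Lemma.} Let $p=\sum_{i\leq d}c_iX^i\in\KR[X]$ and let $c\in\R$ be a simple root of $p$. Since $p'(c)\neq 0$, fix rationals $u<c<v$ and a rational $\mu>0$ with $|p'|\geq\mu$ on $[u,v]$; then $c$ is the unique root of $p$ in $[u,v]$ and $p(u)p(v)<0$. For each $n$, use the $\K$-witnesses of the $c_i$ to produce a rational polynomial $\tilde p_n$ whose coefficients are within $2^{-(n+c_0)}$ of those of $p$; for $n$ beyond a fixed constant $n_0$ the uniform estimate $\|p-\tilde p_n\|_{\infty,[u,v]}=O(2^{-(n+c_0)})$ forces $\tilde p_n$ to be strictly monotone on $[u,v]$ with a unique root $c_n$ there, and the bound $|p(c_n)|=|p(c_n)-\tilde p_n(c_n)|$ combined with $|p(c_n)-p(c)|\geq\mu|c_n-c|$ gives $|c_n-c|\leq 2^{-n-1}$ once $c_0$ is large enough. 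Finally $c_n$ is located to within $2^{-n-1}$ by bisecting $[u,v]$ against $\tilde p_n$ for $\lceil\log_2(v-u)\rceil+n+1$ steps; ``bisect a given rational polynomial on a given rational interval for a given number of steps'' is an instance of bounded primitive recursion (the running subinterval stays inside $[u,v]$, and one step is rational arithmetic plus a sign test), so it is a $\K$-function of its parameters. Composing the coefficient approximation, the formation of $\tilde p_n$ and the bisection, and shifting the argument by $n_0$ to absorb the finitely many small cases, one obtains a single $\Phi\in\K$ with $|c-q(\Phi(n))|\leq 2^{-n}$; thus $c\in\KR$. Note that $u,v,\mu,c_0,n_0$ depend only on the fixed $p$ and $c$ and are merely required to exist, not to be computed.

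\textbf{Conclusion and main obstacle.} If $x\in\KR$ and $x>0$ then $\sqrt x$ is a simple root of $X^2-x\in\KR[X]$, so $\sqrt x\in\KR$; with $0=0^2$, every positive element of $\KR$ is a square. If $p\in\KR[X]$ has odd degree, the intermediate value theorem yields a real root $c$ of $p$; if $c$ has multiplicity $k$ in $p$, then $c$ is a \emph{simple} root of $p^{(k-1)}\in\KR[X]$, which has degree $\geq 1$, so $c\in\KR$ by the Key Lemma. Hence the formally real ordered field $\KR$ satisfies the hypotheses of the Artin--Schreier criterion and is real closed, with $\leq$ its (unique) order. The main obstacle is the Key Lemma: one must check carefully that the root-approximation loop really is bounded primitive recursion with step function in $\mathcal{E}_2\subseteq\K$, that all the error, monotonicity and precision bookkeeping costs only constant shifts of the argument, and that the preliminary ``$n$ large enough'' regime is swept into one constant shift, so that the assembled approximating function is total and fast Cauchy.
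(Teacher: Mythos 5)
Your proof is correct, and its analytic core coincides with the paper's: both arguments isolate a simple real root of $p\in\KR[X]$ inside a rational interval on which $|p'|$ is bounded below by a positive rational and $p$ changes sign, replace the coefficients by $\K$-computable rational approximations, and locate the root to precision $2^{-n}$ using the mean value estimate $|c_n-c|\leq |p(c_n)|/\mu$; the constant shifts of the precision argument play the same role in both. Where you differ is in the top-level reduction and in how non-uniformity is handled. The paper takes a counterexample polynomial of \emph{minimal} degree (which forces it to be squarefree, via a G.C.D.\ argument), writes the root-isolation data as an existential formula $\psi_1\vee\psi_2$ over $\R$, and invokes the Uniformity Principle of \cite{KK_up} to turn that formula into an effective disjunction over rational parameters $A,B,\epsilon$ before fixing a witnessing product of balls around the coefficient tuple. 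You instead reduce via the Artin--Schreier criterion (positive squares plus odd-degree roots), dispose of multiple roots by passing to $p^{(k-1)}$, and simply hardwire the constants $u,v,\mu,c_0,n_0$ into the witness, observing that they need only exist. Your route is more elementary in that it avoids both the minimal-degree/G.C.D.\ detour and the citation of the Uniformity Principle, and it makes the bounded-primitive-recursion bookkeeping for the bisection loop explicit, which the paper leaves implicit; the paper's route, on the other hand, directly yields the slightly stronger conclusion that \emph{every} real root of \emph{every} polynomial over $\KR$ lies in $\KR$ (i.e.\ $\KR$ is relatively algebraically closed in $\R$), of which real closedness is a consequence. Both arguments share the same residual informality about which operations on codes of rationals lie in $\K$, so this is not a defect peculiar to yours.
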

\begin{proof}
The claim that $\KR$ is closed under addition, subtraction,
multiplication and division is straightforward.
To complete the proof, we  show that the roots of polynomials
with coefficients in $\KR$ are also in the class $\KR$.
Assume contrary there exists a polynomial  $p(x)= \sum^n_{i=0}a_i x_i\in\KR[x]$ of minimal degree which has a root $x$ in $\R$ but not in $\KR$. The polynomial $p$ does not have multiple roots since in opposite case it is possible to compute $g={\rm G.C.D.}(p(x),p^\prime(x))$ provided by exact knowledge of zero coefficients
 of $q$ with ${\rm deg}(q)<{\rm deg}(p)$.
So the coefficients $\bar{a}=(a_0,\dots,a_n)$ of $p$ satisfy the following formula:
\[
\psi(\bar{a})=\psi_1(\bar{a})\vee \psi_2(\bar{a}),
\]
where
\[
\psi_1(\bar{a})\leftrightharpoons
(\exists A)(\exists B)(\exists\epsilon>0)
\Big(
A<B\wedge p(A)<-\epsilon\wedge\, p(B)>\epsilon\wedge (\forall x\in [A,B])\, p^\prime(x)>0
\Big)
\]
and
\[
\psi_2(\bar{a})\leftrightharpoons
(\exists A)(\exists B)(\exists\epsilon>0)
\Big(
A<B\wedge p(A)>\epsilon\wedge\, p(B)<-\epsilon\wedge (\forall x\in [A,B])\, p^\prime(x)<0
\Big).
\]

 W.l.o.g. we assume $\R\models\psi_1(\bar{a})$.
 By the Uniformity Principal \cite{KK_up}, the formula $\psi_1$  can be effectively transformed to a formula
 \[
 \bigvee_{A,\, B\in\Q, A<B}\,  \bigvee_{\epsilon\in\Q^+}\Theta_{A,\, B,\,\epsilon}(\bar{a}),
 \]

\noindent where $\Theta_{A,\, B,\,\epsilon}(\bar{x})$ is a uniformly computable disjunctions of $\exists$-formulas without equality.
 Therefore $\psi_1$ defines an effectively enumerable subset of $\R^{n+1}$.
 Suppose $\R\models \Theta_{A,\, B,\,\epsilon}(\bar{a})$. For simplicity of the further reasoning we fix the product of open balls containing $\bar{a}$:
 \[
 \prod_{i=0}^n B(\alpha_i,r_i)\subseteq\{\bar{x}\mid\R\models\Theta_{A,\, B,\,\epsilon}(\bar{x})\}
 \]

 \noindent and the corresponding  $\alpha_i\in\Q$ and $r_i\in\Q^+$, $0\leq i\leq n$.
Since $a_i\in\KR$ for $0\leq i\leq n$, in the framework of $\K$ one can effectively find  rational tuple
$b_0,\dots,b_n$ such that for all $0\leq i\leq n$,
\begin{itemize}
\item  $b_i\in B(\alpha_i,r_i)$ and
\item  $|b_i-a_i|\leq\frac{1}{2^{3m}}$,
\end{itemize}
where $m$  is an argument  of this computations.
Let $M\in\Q^+$ be a bound on $a_i$ and $B$, i.e., $|a_i|<M$ for $0\leq i\leq n$ and $|B|<M$.
Having $m$ and  the required precision $\frac{1}{2^m}$, in the framework of $\K$ one can effectively find $y\in\Q\cap [A,B]$ such that
\[
\mid \sum_{i=0}^n b_iy^i\mid<\frac{1}{3m}
\]
One can assume that $m$ is sufficiently big , i.e.,
$2^m> M^n+\dots+1$. It is worth noting that
$2^m\cdot\epsilon>1$ and
$\mid p(y)\mid\leq \frac{1}{2^{3m}}\cdot(1+\dots|y|^n)\leq \frac{1}{2^{3m}}\cdot 2^m=\frac{1}{2^{3m}}$.
By the mean value  theorem,  for all $x,\, y\in [A,B]$ there exists $c\in [A,B]$
such that $p(x)-p(y)=(x-y)\cdot p'(c)$.
If $x$ is the root of $p$ in the interval $[A,B]$ then
\[
|x-y|\leq \frac{|p(y)|}{\epsilon}\leq\frac{1}{2^{2m}}\cdot2^m\leq\frac{1}{2^m}.
\]

So $y\in \mathbb{Q}$ is an approximation of  the root $x$ with the precision $\frac{1}{2^m}$.
Therefore, $x\in\KR$, a contradiction.
\end{proof}
An discussion  that the previous proposition is an refinement of the result in \cite{Ko} for $\Pt$-numbers
one can find in Section~\ref{index_sets_constructivisiruemih_kr}.

\section{Criterion of Computable Presentability and Archimedean Part}\label{criterion_aof}

Let $L=(L,\leq)$ be linearly ordered and $\mathbb{Q}\subseteq L$.  Assume $\mu:\omega\to L$ is a numbering.

With $L$ we associate 2 families:
\begin{align*}
&A_k=\{ n\mid q_n\leq \mu(k)\}\\
&B_k=\{ n\mid q_n\geq \mu(k)\}
\end{align*}
and naturally define $S_k=A_k\oplus B_k=\{2n\mid n\in A_k\}\cup \{2n+1\mid n\in B_k\}$ and $S_L=\{S_k\mid k\in\omega\}$.
\begin{remark}\label{rm_3}
 It is worth noting that $S_L$ does not depend on the choice of $\mu$.  By the way, it is easy to see that if $\mu$ is  a computable presentation of an ordered field
 $F=(F,+,\cdot,\leq)$  then $\mu\geq q$, i.e., $q_n=\mu(h(n))$
for a computable function $h:\omega\to\omega$ and  the family $S_F$ is computable.
\end{remark}
The following theorem provides  a criterion of computable presentability of  an archimedean ordered field.

\begin{theorem}\label{criterion_comp_copy}
Let $F=(F,+,\cdot,\leq)$ be an archimedean ordered field, $\mu:\omega\to F$ be its numbering such that
$(F,\mu)$ is a numbered  algebra.
Then the family $S_F$ is computable if and only if $(F,\mu)$ is a computable copy.
\end{theorem}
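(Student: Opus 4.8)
The plan is to prove both directions of the equivalence, with the forward direction (computability of $S_F$ implies $(F,\mu)$ is a computable copy) being the substantive one. For the trivial direction, suppose $(F,\mu)$ is already a computable copy. Then by Remark~\ref{rm_3} the rationals embed computably, $q_n = \mu(h(n))$ for a computable $h$, and deciding $q_n \le \mu(k)$ amounts to deciding $\mu(h(n)) \le \mu(k)$, which is computable since $\le$ is computable with respect to $\mu$; symmetrically for $B_k$, so $S_F$ is computable uniformly in $k$.

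For the main direction, assume $S_F$ is computable, i.e.\ there is a computable function $k \mapsto s_k$ such that $W$-style membership in $A_k = \{n \mid q_n \le \mu(k)\}$ and $B_k = \{n \mid q_n \ge \mu(k)\}$ is decidable uniformly in $k$. Since $(F,\mu)$ is already a numbered algebra, addition, subtraction, multiplication, and (away from zero) division are computable on indices; the only thing missing for a computable copy is that the order relation $\mu(j) \le \mu(k)$ and equality $\mu(j) = \mu(k)$ be decidable. First I would observe that because $F$ is archimedean, each $\mu(k)$ is determined by its cut in $\Q$: the data $A_k$ together with $B_k$ pin down $\mu(k)$ exactly, and $\mu(j) = \mu(k)$ iff $A_j = A_k$ (equivalently $S_j = S_k$). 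The key step is to decide $\mu(j) \le \mu(k)$: compute $d = \mu(k) - \mu(j)$, which is an element of $F$ with a $\mu$-index obtainable from $j,k$ via the algebra operations. Now $\mu(j) \le \mu(k)$ iff $d \ge 0$ iff $0 \in A_{\mathrm{index}(d)}$ in the sense $q_{n_0} \le d$ where $q_{n_0} = 0$ --- i.e.\ checking whether the rational $0$ lies in the lower cut $A$ of $d$. Since $S_F$ is computable, membership $n_0 \in A_{\mathrm{index}(d)}$ is decidable, so $\le$ is decidable with respect to $\mu$.

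It remains to decide equality, i.e.\ $\mu(j) = \mu(k)$, equivalently $d = 0$ for $d = \mu(j) - \mu(k)$. This is the delicate point and is where the archimedean hypothesis and the \emph{two-sided} nature of $S_k$ (carrying both $A_k$ and $B_k$) are essential: $d = 0$ iff both $d \ge 0$ and $d \le 0$, each of which we just showed is decidable via membership of $0$ in the lower cut of $d$ and of $-d$ respectively. Hence equality is decidable, and combined with the computable algebra operations and the decidable order, $(F,\mu)$ is a computable copy. The main obstacle to watch for is making the passage "from an index of $\mu(k)$ to an index for the family member $S_k$" genuinely uniform and effective: one must check that applying the ring operations to $\mu$-indices and then querying the computable family $S_F$ can all be threaded together into a single decision procedure, which is where one should be careful that "numbered algebra" indeed supplies computable index functions for $+,-,\cdot$, so that given $j,k$ we can compute an index for $\mu(j) - \mu(k)$ and feed it to the decision procedure for $S_F$. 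I would also double-check the edge case in division by not needing it here, since the order and equality tests reduce entirely to additive combinations and cut-membership queries.
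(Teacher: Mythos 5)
Your overall strategy coincides with the paper's: the forward direction is exactly Remark~\ref{rm_3}, and for the converse you reduce both order and equality between $\mu(j)$ and $\mu(k)$ to cut-membership queries about the difference $d=\mu(k)-\mu(j)$, whose $\mu$-index is computable because $(F,\mu)$ is a numbered algebra (the paper realizes subtraction as $\mu(n)+\mu(a)\cdot\mu(m)$ with $-1=\mu(a)$, since $-$ is not in the signature; a small point you gloss over). The place where you diverge, and where there is a genuine gap, is the step ``since $S_F$ is computable, membership $n_0\in A_{\mathrm{index}(d)}$ is decidable.'' You are reading ``the family $S_F$ is computable'' as ``the predicate $n\in S_k$ is decidable uniformly in the $\mu$-index $k$.'' But $S_F$ is the set $\{S_k\mid k\in\omega\}$, and Remark~\ref{rm_3} stresses that it does not depend on $\mu$; computability of the family means it admits \emph{some} computable numbering $(T_j)_{j\in\omega}$, with no effective link given between a $\mu$-index $k$ and an index $j$ with $T_j=S_k$. (This reading is forced by the discussion following the theorem, where $S_F$ is shown computable for a field $F$ having no computable copy at all, so no suitable $\mu$ is even present there.) You yourself flag ``the passage from an index of $\mu(k)$ to an index for the family member $S_k$'' as the main obstacle, but you discharge it only by noting that the ring operations yield a $\mu$-index for $d$ --- the easy half; the missing half is effectively locating $S_{k_d}$ inside the given computable numbering of the family.

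Relatedly, you omit the one nontrivial ingredient the paper does use: it first claims only that equality is \emph{computably enumerable} and then invokes the fact that a numbered field with c.e.\ equality has decidable equality, because inequality is then also c.e.\ via invertibility ($\mu(n)\neq\mu(m)$ iff $(\mu(n)-\mu(m))\cdot\mu(l)=1$ for some $l$). Under your uniform reading of the hypothesis this step is redundant and your direct decidability argument goes through; under the family reading it is essential, and the remaining work --- which neither your write-up nor, to be fair, the paper's terse proof spells out --- is to show that the queries $2i\in S_{k}$ and $2i+1\in S_{k}$ are at least c.e.\ in $k$ given only a computable numbering of the abstract family. You should either justify the uniformity you assume or reconstruct the c.e.-plus-positive-fields route.
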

\begin{proof}

\noindent The claim  $\rightarrow$ follows from Remark~\ref{rm_3}.

\noindent  For the claim  $\leftarrow$ we assume that $S_F$ is computable.
Let $0=q_i$.  Since for some $a\in\omega$ we have $-1=\mu(a)$, the substraction is defined as  $\mu(n)-\mu(m)=\mu(n)+\mu(a)\cdot\mu(m)$. So, it is clear that $\mu(n)=\mu(m)$ iff $\mu(n)-\mu(m)=\mu(k)\wedge 2i\in S_k\,\wedge\, 2i+1\in S_k $.
Therefore equality is computably enumerable. Since positive fields are computable, equality is computable.

 It is easy to see that  order is also computable. Indeed,
 \begin{align*}
 &\mu(n)<\mu(m)\mbox{ iff }
(\exists k)(\exists l)\, \mu(n)<q_k<q_l\leq\mu(m),\\
&\mu(n)\leq \mu(m)\mbox{ iff } \mu(n)<\mu(m)\vee \mu(n)=\mu(m),\\
&\mu(n)\not\leq \mu(m)\mbox{ iff }\mu(m)<\mu(n).
 \end{align*}
\end{proof}

Now we show that the requirement that
$(F,\mu)$ is a numbered  algebra one can not avoid to establish that $F$ has a computable copy.
For that we start with  general definitions and observations on the archimedean part of an ordered field.
\begin{definition} For $x\in F$, where $F$ is an ordered field we define
$\Sp(x):F\to \mathbb{R}\cup\{-\infty,+\infty\}$ as follows:
  \begin{eqnarray*}
\Sp(x)=\left \{
\begin{array}{lll}
+\infty&\mbox{ if }& x>\mathbb{Q}\\
y \in\mathbb{R} &\mbox{ if }& |x-y|< \mathbb{Q}^{+}\\
-\infty&\mbox{ if }& x<\mathbb{Q}.
\end{array}
\right.
\end{eqnarray*}
\end{definition}

It is worth noting that $y$ is uniquely defined and  if $\Sp(x)\not\in\{-\infty,+\infty\}$ we say about a finite spectrum  of
$x$.

It is worth noting that Zorn lemma provides the fact that any ordered field   $(F, +, \dot, 0, 1, \leq)$  has some maximal archimedean subfield $F_0 \leq F$. The main issue of the following proposition is that all  maximal archimedean subfield are isomorphic each other.
\begin{proposition}\label{archim_part}
Let $(F,+,\cdot,0,1,\leq)$ be an ordered real closed field. Assume $F_0$ is its~maximal archimedean subfield.
Then  $F_0$ is a real closed field  and the greatest archimedean subfield with respect the following pre-order on subfields of $F$:
\begin{align*}
F_1 \leq F_2 \mbox{ iff there is an isomorphic embedding $\varphi: F_1\to F_2$ as ordered fields}.
\end{align*}

Actually,
$F_0\simeq \Sp(F_0)=\Sp(F)\setminus\{-\infty,+\infty\}$ considered as subfields of the reals and
for any archimedean $F_1 \leq F$ it holds $F_1 \cong \Sp(F_1) \leq \Sp(F_0)$.
In particular, all maximal archimedean subfields of $F$ are isomorphic each other.
\end{proposition}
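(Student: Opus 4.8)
The plan is to work throughout with the ``finite part'' $V=\{x\in F\mid \Sp(x)\in\R\}$ of $F$ and with the restriction of the standard-part map to it. I would first record two elementary facts. \emph{Fact 1:} $V$ is a subring of $F$, its elements with $\Sp(x)=0$ (the infinitesimals together with $0$) form its unique maximal ideal, and $\Sp\colon V\to\R$ is an order-preserving ring homomorphism; the verifications that $\Sp(x+y)=\Sp(x)+\Sp(y)$ and $\Sp(xy)=\Sp(x)\Sp(y)$ are the usual Dedekind-cut computations. Consequently, a subfield $K\subseteq F$ is archimedean if and only if $K\subseteq V$ (a field containing a nonzero infinitesimal contains its infinite inverse), and in that case $\Sp|_K\colon K\to\R$ is an injective homomorphism of ordered fields, so $K\cong\Sp(K)$ and $\Sp(K)$ is a subfield of $\R$. \emph{Fact 2:} if $K\subseteq F$ is archimedean and $L\subseteq F$ is a subfield algebraic over $K$, then $L$ is archimedean: for $c\in L$ with $c^{n}+a_{n-1}c^{n-1}+\dots+a_0=0$, $a_i\in K$, choosing a natural number $N$ with all $|a_i|\leq N$ and splitting into the cases $|c|\leq1$ and $|c|>1$ yields $|c|\leq\max(1,nN)$.

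Next I would show $F_0$ is real closed. Let $F_0^{\mathrm{rc}}$ be the relative algebraic closure of $F_0$ in $F$. It is a subfield algebraic over $F_0$, hence archimedean by Fact 2, so maximality of $F_0$ gives $F_0^{\mathrm{rc}}=F_0$; i.e. $F_0$ is relatively algebraically closed in the real closed field $F$. Then every positive element of $F_0$ has a square root in $F$ and every odd-degree polynomial over $F_0$ has a root in $F$ (because $F$ is real closed), and these roots, being algebraic over $F_0$, already lie in $F_0$; by the Artin--Schreier criterion $F_0$ is real closed, and hence so is $\Sp(F_0)$.

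The heart of the argument is the identity $\Sp(F_0)=\Sp(F)\setminus\{-\infty,+\infty\}$; write $R$ for the right-hand side. The inclusion $\subseteq$ is immediate from Fact 1. For $\supseteq$, suppose $r\in R\setminus\Sp(F_0)$ and pick $x\in V$ with $\Sp(x)=r$. If $r$ is algebraic over $\Sp(F_0)$, then since $\Sp(F_0)$ is a real closed subfield of $\R$ it is relatively algebraically closed in $\R$, so $r\in\Sp(F_0)$ --- contradiction. If $r$ is transcendental over $\Sp(F_0)$, I claim $F_0(x)\subseteq F$ is archimedean and strictly larger than $F_0$, contradicting maximality: for $p,q\in F_0[X]$ with $q(x)\ne0$ one has $q\ne0$, so $\bar q\in\Sp(F_0)[X]$ is a nonzero polynomial and $\Sp\bigl(q(x)\bigr)=\bar q(r)\ne0$ by transcendence of $r$; hence $q(x)$ is a unit of $V$ and $p(x)/q(x)=p(x)\cdot q(x)^{-1}\in V$, so $F_0(x)\subseteq V$ is archimedean by Fact 1, while $x\notin F_0$ because $\Sp(x)=r\notin\Sp(F_0)$. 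Therefore $\Sp(F_0)=R$, and in particular $F_0\cong R$, a real closed subfield of the reals.

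It remains to draw the conclusions. For any archimedean $F_1\leq F$, Fact 1 gives $F_1\cong\Sp(F_1)$ and $\Sp(F_1)\subseteq\Sp(F)\setminus\{-\infty,+\infty\}=R=\Sp(F_0)$, so the inclusion $\Sp(F_1)\hookrightarrow\Sp(F_0)$ simultaneously witnesses $\Sp(F_1)\leq\Sp(F_0)$ as subfields of $\R$ and $F_1\leq F_0$ in the pre-order; thus $F_0$ is greatest. Since the identity $\Sp(F_0')=R$ holds for \emph{every} maximal archimedean subfield $F_0'$, all of them are isomorphic to $R$, hence to each other. The step I expect to be the main obstacle is the surjectivity $\Sp(F_0)=R$: this is precisely where real-closedness of $F$ enters twice --- once to guarantee that $F_0$ itself is real closed (so that it already realizes every residue algebraic over $\Sp(F_0)$), and once, via Fact 2, to make the ``enlarge $F_0$ by a transcendental lift'' construction legitimate.
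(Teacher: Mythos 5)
Your proof is correct, and in the part that carries most of the weight --- real closedness of $F_0$ --- it takes a genuinely different route from the paper. The paper proves Lemma~\ref{rcf} by contradiction through resultants: if a monic odd irreducible $p\in F_0[x]$ had a root $x_0\notin F_0$, then $F_0(x_0)$ would be non-archimedean by maximality, some $q(x_0)$ with ${\rm deg}(q)<{\rm deg}(p)$ would be infinitesimal, and ${\rm Res}(p,q)=pA+qB$ would evaluate to a nonzero infinitesimal element of the archimedean field $F_0$. You instead use the elementary root bound of your Fact~2 to see that the relative algebraic closure of $F_0$ in $F$ is archimedean, hence equal to $F_0$ by maximality, so that real closedness follows from the Artin--Schreier criterion; this avoids the resultant computation and the implicit check that $B(x_0)$ is finite, at the price of quoting Artin--Schreier where the paper is self-contained. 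For the identity $\Sp(F_0)=\Sp(F)\setminus\{-\infty,+\infty\}$ the two arguments are essentially contrapositives of one another: the paper starts from non-archimedeanity of $F_0(x_0)$, extracts an infinitesimal irreducible fraction $a(x_0)/b(x_0)$, and concludes that $\Sp(x_0)$ is algebraic over the real closed field $\Sp(F_0)$; you split on whether $r=\Sp(x)$ is algebraic or transcendental over $\Sp(F_0)$, killing the algebraic case by relative algebraic closedness of $\Sp(F_0)$ in $\R$ and the transcendental case by exhibiting $F_0(x)$ as a proper archimedean extension (every $q(x)$ being a unit of the finite part $V$). Your organization buys a cleaner treatment of the valuation-theoretic bookkeeping --- in particular it sidesteps the paper's case $|b(x_0)|>\Q$, which is vacuous since $b$ has archimedean coefficients and $x_0$ is finite --- and it makes explicit where maximality versus real closedness of $F$ is used; the remaining conclusions (greatestness in the pre-order and isomorphism of all maximal archimedean subfields) you derive exactly as the paper does, from $F_1\cong\Sp(F_1)\subseteq\Sp(F_0)$.
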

\begin{lemma}\label{rcf}
$F_0$ is a real closed field.
\end{lemma}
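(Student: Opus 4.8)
The goal is to show that the maximal archimedean subfield $F_0$ of an ordered real closed field $F$ is itself real closed. The plan is to verify the two defining properties of a real closed field for $F_0$: first, that every positive element of $F_0$ has a square root in $F_0$; second, that every polynomial of odd degree with coefficients in $F_0$ has a root in $F_0$. Equivalently — and this is probably the cleaner route — I would use the characterization that $F_0$ is real closed iff $F_0$ is ordered, every positive element is a square, and $F_0$ has no proper algebraic ordered extension; or I would invoke the identification $F_0 \cong \Sp(F_0)$ and argue about subfields of $\mathbb R$.

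**Main argument.**

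Let me sketch the first property. Take $a \in F_0$ with $a > 0$. Since $F$ is real closed, there is $b \in F$ with $b^2 = a$ and $b > 0$. I claim $b \in F_0$. Consider the subfield $F_0(b) \subseteq F$. Because $a$ is archimedean over $\mathbb Q$ (as $a \in F_0$) and $b = \sqrt a$, the element $b$ is also archimedean: indeed $b^2 = a < n$ for some natural number $n$, hence $b < n$ as well (using $b>0$ and that $F_0(b)$ is ordered), and similarly $b$ is bounded below by a rational. More carefully, every element of $F_0(b)$ has the form $p + qb$ with $p,q \in F_0$, and since $F_0$ is archimedean and $b$ is bounded by a rational, every such element is bounded by an element of $F_0$, hence by a rational. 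So $F_0(b)$ is an archimedean subfield of $F$ containing $F_0$; by maximality $F_0(b) = F_0$, so $b \in F_0$. The odd-degree property is analogous but requires slightly more care: given a monic odd-degree $p \in F_0[x]$, it has a root $c \in F$ since $F$ is real closed; then $F_0(c)$ is a finite algebraic extension of the archimedean field $F_0$, and I must argue $F_0(c)$ is still archimedean. The point is that $c$ is a root of a monic polynomial with coefficients bounded by a rational $M$, so $|c| \le 1 + M$ (the standard Cauchy bound, valid in any ordered field), hence $c$ is archimedean over $\mathbb Q$; and then every element of the finite-dimensional $F_0$-algebra $F_0(c)$, being a polynomial in $c$ with $F_0$-coefficients, is again bounded by a rational. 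Maximality of $F_0$ forces $c \in F_0$.

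**The obstacle.**

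The step I expect to be the main obstacle is the verification that $F_0(c)$ (or $F_0(b)$) remains archimedean — that is, controlling the order on a finite algebraic extension inside $F$. One has to be a little careful: "$F_0(c)$ is archimedean" means every element is bounded by a rational, and an element of $F_0(c)$ is an $F_0$-linear combination of powers of $c$; bounding $c$ by a rational via the Cauchy root bound handles the powers of $c$, and then boundedness of the $F_0$-coefficients by rationals (which holds because $F_0$ is archimedean) finishes it, using the triangle inequality for the order. Once archimedeanness is in hand, maximality of $F_0$ does all the work. I would also remark that, alternatively, once we know from Proposition~\ref{archim_part} that $F_0 \cong \Sp(F_0) \le \mathbb R$ as an ordered field, being real closed is equivalent to having no proper archimedean algebraic ordered extension, and the maximality of $F_0$ is exactly that condition; this gives a quick proof modulo the standard fact that real closed fields are precisely the ordered fields with no proper algebraic ordered extension. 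I would present the direct square-root/odd-degree version as the primary proof to keep the argument self-contained.
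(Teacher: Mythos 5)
Your proof is correct, and for the odd-degree case it takes a genuinely different route from the paper's. The paper fixes a monic odd-degree $p$ of minimal degree without a root in $F_0$, argues $p$ is irreducible over $F_0$, assumes a root $x_0\notin F_0$, extracts from the (then non-archimedean) field $F_0(x_0)$ an infinitesimal of the form $q(x_0)$ with ${\rm deg}(q)<{\rm deg}(p)$, and uses the resultant identity ${\rm Res}(p,q)=pA+qB$ to turn it into a nonzero infinitesimal element of $F_0$, contradicting archimedeanness. You instead verify archimedeanness of $F_0(c)$ head-on: the Cauchy root bound controls $|c|$ by a rational, every element of $F_0(c)=F_0[c]$ is an $F_0$-linear combination of the bounded powers of $c$ with rationally bounded coefficients, and a subfield of $F$ all of whose elements are rationally bounded is archimedean (an unbounded element or the reciprocal of a nonzero infinitesimal would violate the bound); maximality of $F_0$ then forces $c\in F_0$. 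Your argument is more elementary, needs neither irreducibility nor resultants, and treats the quadratic and odd-degree cases uniformly, whereas the paper handles $x^2-b$ by essentially your method phrased via infinitesimals and switches to the resultant only for odd degree. What the paper's version buys is that the "Bezout combination of an infinitesimal yields algebraicity" device is of a piece with the argument used immediately afterwards in the proof of Proposition~\ref{archim_part} to show $\Sp(F)\setminus\{-\infty,+\infty\}=\Sp(F_0)$, where a pure boundedness argument would not apply. Your closing side remark (reducing to "no proper algebraic ordered extension") is the only shaky point — such an extension must be realized inside $F$ before maximality of $F_0$ can be invoked — but since you present the direct verification as the primary proof, this does not affect correctness.
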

\begin{proof} (Lemma~\ref{rcf})
We are going to check that monic  (unitary) odd polynomials and $x^2-b$, $(b>0)$ from $F_0[x]$ have roots in $F_0$.
For the case $x^2-b$, $(b>0)$ the claim is straightforward. Indeed, since $F$ is real closed there exists $x_0>0$ such that
$x_0^2=b$ and $x_0>0$. If $x_0\not \in F_0$ then $F_0(x_0)$ is non-archimedean, so, for some $a\in F_0$, $|x_0-a|<\mathbb{Q}^{+}$.
Then $|x_0-a|<\mathbb{Q}^{+}$ and $|a^2-b|<\mathbb{Q}^{+}$, a contradiction.

Assume $p(x)$ is a monic  polynomial of the least odd degree that does not have roots in $F_0$. Fix $x_0\in F$ such that $p(x_0)=0$, so
$\Sp(x_0)$ is finite.
Moreover, it follows   that $p(x)$ is irreducible over $F_0$ since the equality $p=p_1\cdot p_2$, where $p_1$, $p_2$ are monic polynomial of degree
greater than $0$ leads to a contradiction to the choice of $p$.

If $x_0\not \in F_0$ then $F_0(x_0)$ is non-archimedean, so,  for some $q\in F_0[x]$, ${\rm deg}(q)< {\rm deg}(p)$ we have
$|q(x_0)|<\mathbb{Q}^{+}$. Since $p$ and $q$ do not have a common factor, ${\rm Res}(p,q)\in F_0\setminus\{0\}$.
It is well-known that ${\rm Res}(p,q)=p(x)A(x)+q(x)B(x)$ for some $A,\, B\in F_0[x]$. Therefore, $|{\rm Res}(p,q)|< \mathbb{Q}^{+}$,
a contradiction.
\end{proof}

 To finish the proof of the proposition first let us note that $\Sp\upharpoonright F_0$ is an isomorphic embedding.  It is sufficient to show that
$\Sp(F)\setminus \{-\infty,+\infty\}=\Sp(F_0)$.
Let $x_0\in F$ such that $\Sp(x_0)$ is finite and $x_0\not\in F_0$. Since $F_0(x_0)$ is non-archimedean, for some $a,\, b\in F_0[x]$ we have $|\frac{a(x_0)}{b(x_0)}|<\mathbb{Q}^{+}$. W.l.o.g. assume that the fraction is irreducible. We have two cases:

1) $|b(x_0)|>\mathbb{Q}$. This contradicts the finiteness of $\Sp(x_0)$.

2) $|a(x_0)|<\mathbb{Q}^{+}$. Then $\Sp(a)(\Sp(x_0))=0$. That means that $\Sp(x_0)$ is algebraic over the field $\Sp(F_0)$.
Since, by Lemma~\ref{rcf}, $F_0$ and $\Sp(F_0)$ are real closed we have $\Sp(x_0)\in\Sp(F_0)$,
that finishes the proof of Proposition~\ref{archim_part}.

Now  $F_0$  is called  an archimedean part of $F$.

In \cite{Miller} R. Miller and  V. O. Gonzales  constructed  a computable ordered field $F^\prime$ such that no
maximal archimedean subfields of   $F^\prime$  have  computable copies.
In our terms it means precisely that the archimedean part of   $F^\prime$ does not gave a computable copy.
We take this example to illustrate that the requirement that
$(F,\mu)$ is a numbered  algebra one can not avoid to establish that $F$ has a computable copy.
This note highlights the importance of the  results  in \cite{Miller}.

Assume now that $F^\prime$ is computable and   its archimedean part $F=F^\prime_0$ does not have a computable copy.
By Theorem~\ref{criterion_comp_copy} the family $S_{F^\prime}$ is computable.
While  $S_{F^\prime}\neq S_F$, the following formula defines $S_F$:
\begin{align*}
A\in S_F\leftrightarrow (\exists q_i)(\exists q_j)\big( A\in S_{F^\prime}\wedge q_i<q_j\wedge 2i\in A \wedge 2j+1\in A\big).
\end{align*}
Therefore $S_F$ is computable, however $F$ does not have a computable copy.

\section{  Index sets vs. Computable Presentability}\label{index_sets_constructivisiruemih_kr}
In this section we assume
 $\K$ is a class of total computable numerical functions. We associate with $\K$ the class
 $\K^{\{0,1,2\}}=\{f\in \K\mid{\rm im}(f)\subseteq \{0,1,2\}\}$.

  By $\AC$ we denote the almost  constant  functions, i.e.,
  $\AC=\{f:\omega\to\omega\mid (\exists c\in \omega)(\exists x\in \omega)(\forall y>x) f(y)= c\,\} $
and use the following notation $f=^\ast c$ for a function $f \in \AC$ with  the evidence $c \in \omega$.
We assume that $\AC\subseteq \K$.

 We proceed with the definition of  the corresponding structure $\KR$.
It is worth noting that for classes with the restriction from   Section~\ref{KR} the previous and the following definitions are equivalent.
For $f\in\K$ let us denote
\[
 \bar{f}=\sum_{i=0}^\infty\frac{f(i)-1}{2^{i+1}}.
\]

Then we define  $\widetilde{\KR}=\{\bar{f}\mid f\in \K^{\{0,1,2\}}\}$ and $\KR=\{m+\bar{f}\mid m\in\mathbb{Z},\, \bar{f}\in \widetilde{\KR}\}$.

It is easy to see that $\widetilde{\KR}$ contains $\Dyad\cap[-1,1]$ since $AC\subseteq K$
and $1=\sum_{i=0}^\infty\frac{2-1}{2^{i+1}}\in \widetilde{\KR}$.

Motivations why we from right now do not identify $\K$-numbers and elements of $\KR$ are as follows.

Let us consider the following example of the $\Pt$--numbers computed in polynomial time, where a polynomial is applied to the length of an argument. These numbers form a real closed field \cite{Ko} and denoted there as $\Pt_{CF}$. It turns out that for $x\in[-1,1]$ nobody can state that
\[x \mbox{ is $\Pt$-number iff }(\exists f\in\Pt^{\{0,1,2\}})\, x=\bar{f}. \]
Actually, in these terms one can define  a new class $\K_\Pt$ containing functions that are computed in polynomial time, where a polynomial is applied to an argument instead of its length.

It is easy to see that the class $\K_\Pt$ is closed under the bounded primitive recursion scheme, in particular, Proposition~\ref{rcfield_sec}
is applicable to $\widetilde{\K_\Pt}$.
For this class we can state that, for $x\in[-1,1]$,
\[x \mbox{ is $\Pt$-number iff }(\exists f\in\K_\Pt^{\{0,1,2\}})\, x=\bar{f}. \]
That means that  the $\Pt$-numbers are exactly $\widetilde{\K_{\Pt}}$ (c.f. \cite{Ko}).
Further it makes sense to analyse  the set $\widetilde{\Pt}$ that consists from elements $x$ that the computation of i-th sign of $x$ after the comma requires not more than  $N$ steps, where   $N$ is a polynomial  on the length of $i$.
Unfortunately we can not state that $\widetilde{\Pt}$ is an subgroup of $(\mathbb{R},+)$ since this question is still an open problem.
This leads us to introduce  a new definition of  a structure, see below.
 For right now it is worth noting that the $\SPt$-numbers (i.e., $\widetilde{\Pt}$) is naturally associated with the class $\SPt$ of  computable functions which computations require   no more steps than $[\log_2([\log_2(x_i)])]$, $i\leq s$, where $\bar{x}=(x_1,\dots,x_s)$ is an $s$-tuple of arguments. The corresponding structure looks like small however in Section~\ref{without_constr} we show that neither the $\Pt$-numbers no   $\SPt$-numbers  have computable copies.


Our next example is related to an appropriate definition of $\mathcal{E}_2$-numbers. It is well-known  that
$\lambda x.2^x\not\in \mathcal{E}_2$ and $\mathcal{E}_2={\rm LogSpace}$, i.e.,
$f\in\mathcal{E}_2$ if and only if there exists $c\in\omega $ such that for all $x_1,\dots,x_m$  a computation of $f(\bar{x})$ requires not more that $c\cdot {\max}_{i\leq m}|x_i|$ cells of a tape (memory), where $|x|$ denotes the length of $x$ and w.l.o.g
it can be substituted by $[\log_2(x)]$. According to the approach in \cite{Ko} it is natural to have the following definition:

for $x\in[-1,1]$,
\begin{align*}
x \mbox{ is a  $\mathcal{E}_2$-number} \leftrightarrow (\exists f\in{\rm DSpace}^{\{0,1,2\}}(n))
\,\, x=\bar{f},
\end{align*}

and for $x\in\mathbb{R}$,
\begin{align*}
 x \mbox{ is a  $\mathcal{E}_2$-number} \leftrightarrow
 (\exists m\in\mathbb{Z})(\exists  y\in \widetilde{\widetilde{{\rm DSpace}(n)}})\,\,\, x=m+y.
\end{align*}
Therefore $x$ is a $\mathcal{E}_2$-number iff $x\in {\widetilde{{\rm DSpace}(n)}}$.

\noindent According the observations  above  we introduce the definition of a structure on $\KR$.
\begin{definition}\label{str}
Suppose $\K$ is a class of total computable numerical functions, $\KR$ is the set of reals generated by $K$.
Then we associate with $\KR$  a structure  $\KR=(\KR,0,Q^3_{+},Q^3_{-})$, where
\begin{align*}
&\KR\models Q^3_{+}(x,y,z)\leftrightarrow x+y\leq z\\
&\KR\models Q^3_{-}(x,y,z)\leftrightarrow x+y\geq z.
\end{align*}
\end{definition}
It is easy to see that, since the graph of addition is computable, if  a structure $\KR$ has a computable copy $(\KR,\mu)$ then the sets $\mu^{-1}(\Dyad)$ and $\mu^{-1}(\mathbb{Q})$ are computably enumerable.
For example,
\begin{align*}
 \mu(n)\in \Dyad \leftrightarrow (\exists k\in\omega)(\exists l\in\mathbb{Z})\, 2^k\cdot \mu(n)+l=0.
\end{align*}
To proceed    further we define index sets
\begin{align*}
&A_0=\{n\mid \pi_n\subseteq \{0,1,2\}\wedge n\not\in \Tot\},\\
&A_1=\{n\mid \pi_n\subseteq \{0,1,2\}\wedge ( n\not\in \Tot\vee \varphi_n\in\AC)\},\\
&A_2=\{n\mid  \pi_n\subseteq \{0,1,2\}\wedge (n\not\in \Tot\vee\varphi_n\in\K^\ast)\}=\\
&\{n\mid  \pi_n\subseteq \{0,1,2\}\wedge (n\not\in \Tot\vee\overline{\varphi_n}\in\KR)\}
\end{align*}
where  $\K^\ast=\{\varphi_n\mid \overline{\varphi_n}\in \KR\cap [-1,1] \}$.
\begin{theorem}\label{sigma02construct}
Suppose $\K$ is a class of total computable numerical functions. If  the structurer $\KR$ generated by $K$ has a computable presentation then
$A_0\cup (A_2\setminus A_1)\in \Sigma^0_2$.
\end{theorem}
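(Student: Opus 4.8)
The plan is to assume a computable presentation $(\KR,\mu)$ and express the set $A_0\cup(A_2\setminus A_1)$ by an explicit $\Sigma^0_2$ formula. The key observation, already flagged in the excerpt, is that for any $n$ with $\pi_n\subseteq\{0,1,2\}$ there are exactly three possibilities to separate: (i) $n\notin\Tot$; (ii) $n\in\Tot$ and $\overline{\varphi_n}\in\KR\cap[-1,1]$ (i.e. $\varphi_n\in\K^\ast$); (iii) $n\in\Tot$ but $\overline{\varphi_n}\notin\KR$. Writing $D_n$ for the condition "$\pi_n\subseteq\{0,1,2\}$'' (which is $\Pi^0_1$, since it says $(\forall m)\,\varphi_n(m)\!\downarrow\,\Rightarrow \varphi_n(m)\in\{0,1,2\}$), we have $n\in A_2$ iff $D_n$ and (i) or (ii); and the condition (i)-or-(ii) is itself $\Sigma^0_2$: "$n\notin\Tot$'' is $\Sigma^0_2$, while "$n\in\Tot\wedge\overline{\varphi_n}\in\KR\cap[-1,1]$'' will be shown to be $\Sigma^0_2$ using $\mu$. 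So $A_2\in\Sigma^0_2$. By the same token $n\in A_0$ iff $D_n\wedge n\notin\Tot$, which is $\Sigma^0_2$. The real content is that $A_2\setminus A_1$ can be written without an awkward $\Pi$-part intruding: note $A_2\setminus A_1 = \{n\in A_2 : n\in\Tot\wedge\varphi_n\notin\AC\}$, and crucially, on the set where $D_n$ holds and $\varphi_n\in\K^\ast$, the predicate "$\varphi_n\notin\AC$'' becomes $\Sigma^0_2$ too, because for total $\varphi_n$ with values in $\{0,1,2\}$ we have $\varphi_n\in\AC$ iff $(\exists c)(\exists x)(\forall y>x)\,\varphi_n(y)=c$, whose negation is $\Pi^0_2$ in general but restricts nicely here. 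I will instead observe the cleaner identity $A_0\cup(A_2\setminus A_1)=\{n\in A_2 : n\notin A_1\setminus A_0\}$ and that $A_1\setminus A_0=\{n : D_n\wedge n\in\Tot\wedge\varphi_n\in\AC\}$.

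**The key lemma: a $\Sigma^0_2$ description of $\K^\ast$ via $\mu$.**

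First I would prove: if $(\KR,\mu)$ is a computable presentation of the structure of Definition~\ref{str}, then the relation "$\varphi_n\in\K^\ast$, i.e. $n\in\Tot$ and $\overline{\varphi_n}\in\KR\cap[-1,1]$'' is $\Sigma^0_2$. The idea is that a real $x=\overline{f}$ with $f$ total and $\{0,1,2\}$-valued lies in $\KR\cap[-1,1]$ iff there is an index $k$ such that $\mu(k)=x$ and $-1\le\mu(k)\le 1$. The condition $-1\le\mu(k)\le1$ is $\Sigma^0_1$ (decidable, even) from the computable presentation via $Q^3_+,Q^3_-$ and the known $\mu$-index of $0$ and $1$ (which exist since $\Dyad\subseteq\widetilde{\KR}$). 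And "$\mu(k)$ equals the real $\overline{\varphi_n}$'' can be expressed by comparing $\mu(k)$ with all dyadic approximations $\sum_{i\le m}(\varphi_n(i)-1)/2^{i+1}$: since $\mu^{-1}(\Dyad)$ is c.e. and dyadic-rational comparisons are computable in $(\KR,\mu)$ (the excerpt already notes $\mu(n)\in\Dyad$ is c.e., and one gets $\mu(k)<d$ for dyadic $d$ via $Q^3_\pm$), the statement "for all $m$, $|\mu(k)-\sum_{i\le m}\dots|\le 2^{-m}$'' is $\Pi^0_2$, but combined with the requirement that $\varphi_n$ be total it collapses appropriately — more precisely, $n\in\Tot\wedge(\exists k)[\,\mu(k)=\overline{\varphi_n}\wedge -1\le\mu(k)\le1\,]$ is $\Sigma^0_2$ because "$n\in\Tot$'' is $\Pi^0_2$ and "$\mu(k)=\overline{\varphi_n}$'' given totality of $\varphi_n$ is $\Pi^0_1$. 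The conjunction of $\Pi^0_2$ with $(\exists k)\Pi^0_1$ is $\Pi^0_2$... so I need to be more careful: the right formulation is that $n\in A_2$ iff $D_n\wedge[\,n\notin\Tot\vee(\exists k)(\mu(k)=\overline{\varphi_n}\wedge{-1}\le\mu(k)\le1)\,]$, and the bracket is $\Sigma^0_2$ (a disjunction of a $\Sigma^0_2$ set with a $\Sigma^0_2$ set — the latter because over $\Tot$, "$\mu(k)$ tracks $\varphi_n$'' is $\Pi^0_1$, and prefixing $(\exists k)$ and not insisting on totality inside keeps it $\Sigma^0_2$), while $D_n$ is $\Pi^0_1\subseteq\Sigma^0_2$; hence $A_2\in\Sigma^0_2$. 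Likewise $A_0,A_1\in\Sigma^0_2$ directly from their definitions, using that "$\varphi_n\in\AC$'' is $\Sigma^0_2$ even without $\mu$.

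**Assembling $A_0\cup(A_2\setminus A_1)$.**

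Finally, unwinding definitions: $n\in A_0\cup(A_2\setminus A_1)$ iff $D_n$ holds and exactly one of the following: $n\notin\Tot$; or ($n\in\Tot$, $\overline{\varphi_n}\in\KR\cap[-1,1]$, and $\varphi_n\notin\AC$). Equivalently, $n\in A_0\cup(A_2\setminus A_1)$ iff
\[
D_n\ \wedge\ \Big(n\notin\Tot\ \vee\ \big(\,\overline{\varphi_n}\in\KR\cap[-1,1]\ \wedge\ \varphi_n\notin\AC\,\big)\Big).
\]
The crux is that the inner disjunction is $\Sigma^0_2$: "$n\notin\Tot$'' is $\Sigma^0_2$; and the second disjunct, on the assumption that $\varphi_n$ is total (which is forced by $\overline{\varphi_n}\in\KR\cap[-1,1]$ being witnessed), is the conjunction of the $\Sigma^0_2$ statement "$(\exists k)(\mu(k)=\overline{\varphi_n}\wedge{-1}\le\mu(k)\le1)$'' (here one does need $\varphi_n$ total for the $\mu(k)$-tracking clause to be $\Pi^0_1$ rather than worse, so one re-expresses totality inside the existential witness) with the statement "$\varphi_n\notin\AC$'', and $\varphi_n\notin\AC$ for a function known to be total with values in $\{0,1,2\}$ is $\Pi^0_2$ — which is the wrong side. \emph{This sign issue is the main obstacle}, and resolving it is precisely where Theorem~\ref{sigma02construct}'s hypothesis of a genuine computable presentation (not just a numbered algebra) must be used: with $\mu$ at hand one replaces "$\varphi_n\notin\AC$'' by the assertion that the real $\mu(k)$ ($=\overline{\varphi_n}$) is \emph{not} dyadic, i.e. $\mu(k)\notin\Dyad$, because a total $\{0,1,2\}$-valued $f$ has $f\in\AC$ (with eventual value $0$ or $2$) exactly when $\overline{f}\in\Dyad$ — and $\mu(k)\notin\Dyad$ is the complement of a c.e. set, hence $\Pi^0_1$, and combined with the $\Sigma^0_2$ clauses keeps everything $\Sigma^0_2$. (One must double-check the boundary cases where $f\in\AC$ with eventual value $1$, giving $\overline{f}$ dyadic as well; these are subsumed, so the equivalence "$f\in\AC\iff\overline{f}\in\Dyad$'' for total $\{0,1,2\}$-valued $f$ holds on the nose.) With that substitution the whole predicate becomes a Boolean combination of $\Sigma^0_2$ and $\Pi^0_1$ clauses assembled so that the result is $\Sigma^0_2$, completing the proof.
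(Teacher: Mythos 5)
Your proposal is correct and follows essentially the same route as the paper: both arguments hinge on replacing the $\Pi^0_2$ condition ``$\varphi_n\notin\AC$'' by the condition that the real $\overline{\varphi_n}$ is not dyadic, which is $\Pi^0_1$ relative to the computable presentation because $\mu^{-1}(\Dyad)$ is c.e., together with the equivalence $f\in\AC\iff\overline{f}\in\Dyad$ for total $\{0,1,2\}$-valued $f$. The paper merely packages this via an auxiliary sign-digit numbering $\nu$ and the c.e. set $Y=\{m\mid \nu(m)\in\AC\}$, whereas you compare $\mu(k)$ directly with the dyadic partial sums of $\varphi_n$; the resulting $\Sigma^0_2$ descriptions coincide.
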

\begin{proof}

Let $\mu:\omega\to\KR$ be a computable presentation.
 Since the set $E=\{n\mid -1\leq\mu(n)\leq 1\}$ is computable there exists a computable function $h$  such that
 ${\rm im}(h)=E$ and $\tilde{\mu}=\mu\circ h$ is  a computable numbering of $\KR\cap [-1,1]$.
Assume $x=\tilde{\mu}(n)$. Now we construct a map $\nu:\omega\to \mathbb{T}$ by induction:
\begin{align*}
& \nu(n)(0)=1\\
& \nu(n)(s+1)=\left \{
\begin{array}{lll}
0 & \mbox{ if }&  x<x_s\\
1 &\mbox{ if }&  x=x_s\\
2 &\mbox{ if }&  x>x_s,
\end{array}
\right.
\end{align*}
 where $x_s=\sum_{i\leq s}\frac{\nu(n)(i)-1}{2^i}$.
 Since $|x_s-x|\leq \frac{1}{2^s}$, $|x_{s+1}-x|\leq \frac{1}{2^{s+1}}$. From $\AC\subseteq K$ it follows that
 $x_s\in\KR$.

 We have the following properties:
 $\nu(n)$ is total,  $\nu(n)\in \mathbb{T}$ and $\nu(n)$ provides  a sign-digit representation of $x$.

From properties of $\mu$ it follows that $\nu(n)\in\AC$
  is a $\Sigma^0_1$--condition since $\nu(n)\in\AC\leftrightarrow \mu(n)\in\Dyad$ and
$\mu(n)\in \Dyad\leftrightarrow (\exists k\in\omega)(\exists l \in\mathbb{Q})\, 2^k\cdot\mu(n)+l=0$.
As  a corollary,  $Y=\{n\mid\nu(n)\in AC\}$ is computably enumerable.
Now we show that $A_0\cup A_2\setminus A_1\in \Sigma^0_2$. Let us note that
\begin{align*}
& n \in A_0\cup (A_2\setminus A_1)\leftrightarrow\\
&
n\in A_0\vee
\Big(n\in A_2\wedge \Big( (\exists m\in \omega\setminus Y)\, \overline{\nu(m)}=\overline{\varphi_n}
\vee n\not\in\Tot\Big)\Big).
\end{align*}

We have the following:
\begin{itemize}
\item The relation $n\not\in Y$ is $\Pi^0_1$.

\item The relation $\overline{\nu(m)}=\overline{\varphi_n}$ is $\Pi^0_1$.
It follows from the following observations.
Let

\begin{align*}
\Phi(f,g)\leftrightarrow \leftrightharpoons
(\exists s>0)\,
  \Big|\sum^{s}_{k=0}\frac{f(k)-1}{2^{k+1}}- \sum^{s}_{k=0}\frac{g(k)-1}{2^{k+1}}\big|>\frac{1}{2^{s-1}}
.
\end{align*}
Then for $f,\, g\in \mathbb{T}^{\{0,1,2\}}$, $\overline{f}\neq \overline{g}\leftrightarrow \Phi(f,g)$. So, $\overline{\nu(m)}=\overline{\varphi_n}\leftrightarrow \neg\Phi(\nu(m),\varphi_n)$.

 \item The relation $n\in A_2$ is $\Sigma^0_2$ since
\begin{align*}
n\in A_2\leftrightarrow n\not\in\Tot\vee (\exists m\in\omega)\neg \Phi(\nu(m),\varphi_n)
.
\end{align*}
\item The relation $n\not\in Tot$ is $\Sigma^0_2$.
\end{itemize}

Therefore $A_0\cup (A_2\setminus A_1)\in \Sigma^0_2$.
\end{proof}
It is worth noting that in many cases $\KR$ is an abelian ordered group, in particular $\widetilde{\mathcal{E}}_n$, $n\geq 2$,
$\Pt_{CF}$-numbers.
The same proof is valid when one consider just a computable presentation $\mu$ of  a linear ordered $(\KR,\leq)$ with the requirement that $\mu\geq d$, where $d$ is a standard computable presentation of $(\Dyad,\leq)$.

\section{Criterion of m-completeness for  tuples of $\Sigma^0_1$ and $\Sigma^0_2$ sets }\label{m-universality}
In this section for $s\geq 1$ we consider  $s$-tuples    $(A_0,\dots,A_{s-1})$, where all $A_i$ are either $\Sigma^0_1$-sets or
all $A_i$ are $\Sigma^0_2$--sets.

For uniformity of  a presentation we introduce a symbol $l$ where   $l\in\{1,2\}$, a  relation $\sim_l$ on sets and an oracle $z_l$ that have the following interpretation.
If $l=1$ then $ A\sim_l B$ means that  $A$ and $B$ are equal and the oracle $z_l=\emptyset$. If $l=2$ then $ A\sim_l B$ means that $(A\setminus B)\cup (B\setminus A)$ is finite, i.e.,  $A$ and $B$ are almost equal, denoted $A=^* B$. The oracle $z_l=Kw$,
where $Kw=\{n\mid \varphi_n(n)\downarrow\}$ or could be any creative set.
We generalise ideas of the criterion of $m$-completeness of $\Sigma^0_1$-sets and $\Sigma^0_2$-sets in
\cite{Arslanov_0} to fit  $m$-completeness of  $s$-tuples of $\Sigma^0_1$-sets and $\Sigma^0_2$-sets that requires modifications of  concepts and definitions.

\begin{remark}\label{rmalmosteq}
It is well known ( see c.f. \cite{Arslanov})  that on the set of all computable enumerable sets of $\omega$, given any (partial) $\Sigma^0_2$-function $f$
one can effectively construct  a total computable function $F$ such that
$(\forall x\in \dom(f))\, W_{f(x)}=^\ast W_{F(x)}$, moreover $\varphi_{f(x)}=^\ast \varphi_{F(x)}$.
\end{remark}

\begin{definition}
Let  $(F_0,\dots,F_{s-1})$ be an $s$-tuple of  functions, where $F_i:\omega^s\to\omega$, $0\leq i\leq s-1$
and $(A_0,\dots,A_{s-1})$ be an $s$-tuple of $\Sigma^0_l$-sets.
We say that $(F_0,\dots,F_{s-1})$ is m-reducible to $(A_0,\dots,A_{s-1})$, denoted as  $(F_0,\dots,F_{s-1})\leq_m (A_0,\dots,A_{s-1})$,
if there exist computable functions $h:\omega^s\to\omega$, $a_i:\omega^s\to\omega$, $b_i:\omega^s\to\omega$,  $0\leq i\leq s-1$,   such that

 \begin{eqnarray*}
F_i(\bar{x})=\left \{
\begin{array}{lll}
a_i(\bar{x}) & \mbox{ if }&  h(\bar{x})\in A_i\\
b_i(\bar{x}) &\mbox{ if }&  h(\bar{x})\not\in A_i.
\end{array}
\right.
\end{eqnarray*}

\end{definition}

It is easy to see that this definition is a  generalisation of the common m-reducibility of c.e. sets \cite{Rogers}.
\begin{lemma}\label{lem_m_reduc}
 For  s-tuples $X$ and $A$ of $\Sigma^0_l$-sets if $(F_0,\dots,F_{s-1})\leq_m (X_0,\dots,X_{s-1})$ and $(X_0,\dots,X_{s-1})\leq_m (A_0,\dots,A_{s-1})$ then
$(F_0,\dots,F_{s-1})\leq_m (A_0,\dots,A_{s-1})$.
\end{lemma}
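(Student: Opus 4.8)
The plan is to simply compose the two given reductions, being careful that the same "coordinate index" $i$ is threaded through both of them. By hypothesis there are computable functions $h$, $a_i$, $b_i$ ($0\leq i\leq s-1$) witnessing $(F_0,\dots,F_{s-1})\leq_m (X_0,\dots,X_{s-1})$, so that
\[
F_i(\bar{x})=\begin{cases} a_i(\bar{x}) & h(\bar{x})\in X_i\\ b_i(\bar{x}) & h(\bar{x})\notin X_i,\end{cases}
\]
and computable functions $g$, $c_i$, $d_i$ witnessing $(X_0,\dots,X_{s-1})\leq_m (A_0,\dots,A_{s-1})$, so that
\[
X_i(\bar{y})=\begin{cases} c_i(\bar{y}) & g(\bar{y})\in A_i\\ d_i(\bar{y}) & g(\bar{y})\notin A_i.\end{cases}
\]
Here I am reading "$X_i(\bar y)$" as the two-valued witness function supplied by the second reduction; more precisely the second reduction asserts that for the particular tuple of functions $(X_0,\dots,X_{s-1})$ there are computable $c_i,d_i$ with $X_i(\bar y)=c_i(\bar y)$ when $g(\bar y)\in A_i$ and $=d_i(\bar y)$ otherwise, and in the intended application the $X_i$ are themselves $\{0,1\}$-valued membership functions (or, more generally, whatever functions the tuple consists of). I would first make this reading explicit so that the composition below typechecks.

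Next I would define the new reduction. Set $H(\bar x)=g(h(\bar x))$, which is computable. For each $i$ I claim
\[
F_i(\bar x)=\begin{cases} A_i'(\bar x) & H(\bar x)\in A_i\\ B_i'(\bar x) & H(\bar x)\notin A_i,\end{cases}
\]
where $A_i'$ and $B_i'$ are the computable functions obtained by case analysis on whether $X_i(h(\bar x))$ takes its "in" value $c_i(h(\bar x))$ or its "out" value $d_i(h(\bar x))$ — and correspondingly returns $a_i(\bar x)$ or $b_i(\bar x)$. Concretely, I would define $A_i'(\bar x)=a_i(\bar x)$ if $c_i(h(\bar x))$ equals the designated "true" code, i.e.\ if plugging $h(\bar x)$ into the first reduction via the value $c_i(h(\bar x))$ corresponds to $h(\bar x)\in X_i$, and $=b_i(\bar x)$ otherwise; $B_i'$ is defined with $d_i$ in place of $c_i$. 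All of these are computable since $h,g,a_i,b_i,c_i,d_i$ are. The verification is then a two-step unwinding: if $H(\bar x)=g(h(\bar x))\in A_i$ then $X_i(h(\bar x))=c_i(h(\bar x))$, which tells us whether $h(\bar x)\in X_i$, which via the first reduction tells us $F_i(\bar x)\in\{a_i(\bar x),b_i(\bar x)\}$ accordingly; and symmetrically if $H(\bar x)\notin A_i$.

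I do not expect a genuine obstacle here — this is the standard "composition of many-one reductions" argument, lifted coordinatewise, and the only thing that needs care is bookkeeping: making sure a single $h$ (not an $i$-dependent one) is allowed by the definition, that the same index $i$ labels the coordinate in both reductions, and that the witness functions $a_i,b_i$ of the first reduction can be post-composed with the case split coming from the second. The one mildly delicate point, which I would state as a remark rather than belabor, is that the definition of $\leq_m$ refers to a fixed tuple of $\Sigma^0_l$-sets on the right-hand side whereas the left-hand side is a tuple of arbitrary functions; applying the hypothesis "$(X_0,\dots,X_{s-1})\leq_m(A_0,\dots,A_{s-1})$" requires that the tuple $(X_0,\dots,X_{s-1})$ appearing there be a tuple of functions, so in the write-up I would either identify each $\Sigma^0_l$-set with its characteristic function or, more cleanly, phrase the lemma for the function-tuple on the far right as well, which changes nothing in the proof.
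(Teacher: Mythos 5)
Your argument is correct. Note that the paper states Lemma~\ref{lem_m_reduc} without any proof, treating it as routine, so there is no authorial proof to compare against; your coordinatewise composition is exactly the routine argument the authors evidently intend. The one point worth flagging is the definitional mismatch you yourself notice: the middle tuple $(X_0,\dots,X_{s-1})$ consists of sets, while the left-hand side of the paper's definition of $\leq_m$ must be a tuple of functions $\omega^s\to\omega$. The paper's own later usage (the proposition $(Y_0,Y_1,Y_2)\leq_m(A_0,A_1,A_2)$ in Section~\ref{without_constr}) resolves this by taking a reduction between tuples of sets to be a single computable $f$ with $n\in Y_i\leftrightarrow f(n)\in A_i$ for all $i$; under that convention the lemma is immediate --- set $h'=g\circ h$ and keep the same $a_i,b_i$ --- and your additional case split on whether $c_i(h(\bar x))$ equals the designated ``true'' code is needed only under your more literal characteristic-function reading. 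Either reading yields a correct proof, so there is no gap; you might simply state up front which convention you adopt for set-to-set reductions.
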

\begin{proposition}\label{criteria_m_compl}
Let  $(A_0,\dots,A_{s-1})$ be a $s$-tuple of $\Sigma^0_l$-sets.
The following claims are equivalent.
\begin{enumerate}
\item $(A_0,\dots,A_{s-1})$ is m-complete in the class of  $s$-tuples of $\Sigma^0_l$-set.
\item  There exists  a computable function (its productive function) $H:\omega^s\to\omega$, i.e., for all $x_0,\dots\,x_{s-1}\in\omega$
\[
H(x_0,\dots\,x_{s-1})\in \bigcap_{i=0}^{s-1}
\Big( (A_i\cap W_{x_i})\cup (\overline{A_i}\cap \overline{W}_{x_i})
\Big)
\]
\item There exists a $s$-tuple of  functions $(F_0,\dots,F_{s-1})$, where $F_i:\omega^s\to\omega$, $0\leq i<s$, such that
   \begin{enumerate}
   \item $(F_0,\dots,F_{s-1})\leq_m (A_0,\dots,A_{s-1})$,
   \item $W_{F_i(\bar{x})}\not\sim_l W_{x_i}$ for all $0\leq i<s$.
   \end{enumerate}
   \item There exists a $s$-tuple of  functions $(F_0,\dots,F_{s-1})$, where $F_i:\omega^s\to\omega$, $0\leq i<s$, such that
   \begin{enumerate}
   \item $(F_0,\dots,F_{s-1})\leq_m (A_0,\dots,A_{s-1})$,
   \item $\varphi_{F_i(\bar{x})}\not\sim_l \varphi_{x_i}$ for all $0\leq i<s$.
   \end{enumerate}
\end{enumerate}
\end{proposition}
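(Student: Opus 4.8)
The plan is to prove the four equivalences by a round-robin of implications, generalizing the $m$-completeness criterion for single $\Sigma^0_1$- and $\Sigma^0_2$-sets from \cite{Arslanov_0}; concretely I will establish $(1)\Rightarrow(3)\Rightarrow(2)\Rightarrow(1)$ together with $(1)\Rightarrow(4)$ and $(4)\Rightarrow(2)$, which already closes the cycle. The standing tools are the recursion theorem with parameters, taken in its simultaneous form so that all $s$ components of a tuple of indices can be built self-referentially at once, and a fixed \emph{universal} $m$-complete $s$-tuple $(U_0,\dots,U_{s-1})$ of $\Sigma^0_l$-sets, namely $U_i=\{\langle e,n\rangle : n\in W^{z_l}_{(e)_i}\}$ for a fixed $s$-ary pairing, to which every $s$-tuple of $\Sigma^0_l$-sets $m$-reduces via the evident map $n\mapsto\langle\langle d_0,\dots,d_{s-1}\rangle,n\rangle$. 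The cases $l=1$ and $l=2$ are carried out in parallel, the second obtained from the first by replacing ``c.e.'' with ``c.e.\ in $z_l$'', ``$=$'' with ``$=^*$'', and --- whenever a construction at first yields only a $\Sigma^0_2$-index instead of a genuine computable function --- invoking Remark~\ref{rmalmosteq} to pass to a computable function at the cost of almost-equality.

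The ``easy'' implications are short and explicit. For $(1)\Rightarrow(3)$ (and identically $(1)\Rightarrow(4)$) fix an $m$-reduction $g$ of $(U_0,\dots,U_{s-1})$ to $(A_0,\dots,A_{s-1})$. Given $\bar x$, use the $s$-m-n theorem to obtain indices $c_i(\bar x)$ with $W_{c_i(\bar x)}$ equal, for $l=1$, to $\{0\}$ if $W_{x_i}\neq\emptyset$ and to $\emptyset$ otherwise, and, for $l=2$, to $\omega$ if $W_{x_i}$ is finite and to $\emptyset$ otherwise --- a condition on $x_i$ that is c.e.\ in $z_l$. Setting $h(\bar x)=g(\langle\langle c_0(\bar x),\dots,c_{s-1}(\bar x)\rangle,0\rangle)$ makes $h(\bar x)\in A_i$ equivalent to the chosen condition on $x_i$; taking $a_i,b_i$ to be the fixed indices of $\emptyset$ and $\{0\}$ (for $l=2$: of $\omega$ and $\emptyset$; for $(4)$: of the nowhere-defined and the everywhere-zero functions) one verifies directly that $W_{F_i(\bar x)}\not\sim_l W_{x_i}$, resp.\ $\varphi_{F_i(\bar x)}\not\sim_l\varphi_{x_i}$, for every $\bar x$ and $i$. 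For $(2)\Rightarrow(1)$, given a productive function $H$ and an arbitrary $s$-tuple $(B_0,\dots,B_{s-1})$ of $\Sigma^0_l$-sets, put $W_{n_i(m)}=\omega$ if $m\in B_i$ and $W_{n_i(m)}=\emptyset$ otherwise (uniformly c.e., resp.\ c.e.\ in $z_l$, in $m$); then $r(m):=H(n_0(m),\dots,n_{s-1}(m))$ is a computable $m$-reduction of $(B_0,\dots,B_{s-1})$ to $(A_0,\dots,A_{s-1})$, since $r(m)\in A_i\iff r(m)\in W_{n_i(m)}\iff m\in B_i$.

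The heart of the proof is the reverse pass $(3)\Rightarrow(2)$ and $(4)\Rightarrow(2)$: extracting a productive function --- hence, by the above, $m$-completeness --- from a diagonalizing branching reduction $F_i,h,a_i,b_i$. Given a query $\bar n$, the plan is to use the simultaneous recursion theorem to choose a tuple $\bar y(\bar n)$ with $h(\bar y(\bar n))=:p$ and to define the sets $W_{y_i(\bar n)}$ so that the given inequality $W_{F_i(\bar y(\bar n))}\not\sim_l W_{y_i(\bar n)}$, read against the branching law ``$F_i(\bar y(\bar n))=a_i(\bar y(\bar n))$ iff $p\in A_i$'', forces $p\in A_i\iff p\in W_{n_i}$ for each $i$; then $H(\bar n):=p$ works. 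The main obstacle is precisely that $W_{y_i(\bar n)}$ must be steered to coincide with $W_{a_i(\bar y(\bar n))}$ when $p\notin W_{n_i}$ and with $W_{b_i(\bar y(\bar n))}$ when $p\in W_{n_i}$, a branching driven by the merely c.e.\ event ``$p\in W_{n_i}$'' while only enumeration into $W_{y_i(\bar n)}$ is available; this asymmetry is exactly what is resolved in \cite{Arslanov_0} by a guess-and-correct enumeration together with a careful recursion-theoretic choice of the index $y_i(\bar n)$, and the present work is to run that argument simultaneously over the $s$ coordinates and --- for $l=2$ --- to let the correction be absorbed into the $=^*$-slack provided by Remark~\ref{rmalmosteq}. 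With $(3)\Rightarrow(2)$ and $(4)\Rightarrow(2)$ established the cycle closes, proving all four statements equivalent.
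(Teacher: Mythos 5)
Your overall architecture coincides with the paper's: $1\leftrightarrow 2$ is the classical productive-function criterion (the paper simply cites \cite{Ersh_TH} for $l=1$ and its relativization to $z_2$ for $l=2$, while you reprove $2\Rightarrow 1$ correctly via the indices $n_i(m)$ with $W^{z_l}_{n_i(m)}\in\{\omega,\emptyset\}$); your $1\Rightarrow 3$ and $1\Rightarrow 4$ via a universal tuple and s-m-n are sound and are only cosmetically different from the paper's use of the concrete complete tuples $X_i=\{n\mid\varphi_n(i)\downarrow\}$ and $Z_i=\{n\mid W_n\cap(3\omega+i)\mbox{ finite}\}$. One small economy you miss: the paper routes $3\Rightarrow 4$ as a one-line observation (if the domains are $\not\sim_l$ then so are the graphs), so only $4\Rightarrow 2$ needs the hard work, whereas your cycle commits you to doing the hard direction twice, once for $W$'s and once for $\varphi$'s.

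The genuine issue is that the decisive implication $(4)\Rightarrow(2)$ is announced rather than proved. You correctly isolate the obstacle --- on the symmetric difference of the two $\Sigma^0_l$ events ``$h(\bar y)\in A_i$'' and ``$h(\bar y)\in W^{z_l}_{x_i}$'' the index $y_i$ must commit to $a_i(\bar y)$ in the first case and to $b_i(\bar y)$ in the second --- but you then defer its resolution to the single-set argument of \cite{Arslanov_0} ``run simultaneously,'' without exhibiting the uniformity that makes this legitimate. What has to be supplied (and what the paper supplies) is: (i) both competing events are $\Sigma^0_l$, so the reduction principle for graphs of partial $z_l$-computable functions yields a single-valued selector $E_i(\bar x,\bar y)$, with a computable index function $g_i$ so that $E_i(\bar x,\cdot)=K^{4,z_l}(g_i(\bar x),\cdot)$, taking the value $a_i(\bar y)$ on $A_i\setminus W^{z_l}_{x_i}$ and $b_i(\bar y)$ on $W^{z_l}_{x_i}\setminus A_i$; (ii) Smullyan's simultaneous fixed-point theorem \cite{Smullyan} applied to all $s$ coordinates at once produces computable $n_0,\dots,n_{s-1}$ with $\varphi_{K^{4,z_l}(g_i(\bar x),\bar n(\bar g(\bar x)))}\sim_l\varphi_{n_i(\bar g(\bar x))}$, whereupon $H(\bar x)=h(\bar n(\bar g(\bar x)))$ is the productive function, the two failure cases each contradicting $\varphi_{F_i(\bar n(\bar g(\bar x)))}\not\sim_l\varphi_{n_i(\bar g(\bar x))}$; and (iii) for $l=2$ the selector only has a $\Sigma^0_2$-index, so Remark~\ref{rmalmosteq} must be invoked \emph{before} the fixed point is taken, converting it to a genuinely computable index at the cost of $=^*$, which is exactly absorbed by $\sim_2$. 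You name ingredients (ii) and (iii) but not (i), and without (i) the ``guess-and-correct enumeration'' has no uniform implementation; as written the proposal is a correct plan with the central construction missing.
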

\begin{proof}
\noindent $1)\leftrightarrow 2)$. For $l=1$ the equivalents of  the statements can be found in \cite{Ersh_TH}.
The existence  of $m$-complete $s$-tuple of computably enumerable  sets has been also established there.
For $l=2$ we only need  a relativisation to the oracle $z_2$ which also could be found in \cite{Ersh_TH}.

\noindent $1)\to3)$.
Without loss of generality we assume $s=3$.

\noindent \textbf{Case 1:  l=1.}
First we
take the following m-complete 3-tuple:
 \begin{align*}
& X_0=\{n\mid \varphi_n(0)\downarrow\},\\
 &X_1=\{n\mid \varphi_n(1)\downarrow\},\\
 &X_2=\{n\mid \varphi_n(2)\downarrow\}.\\
  \end{align*}
Further on by By Lemma~\ref{lem_m_reduc}  the considerations below will hold for any m-complete 3-tuple.
  By Graph theorem \cite{Rogers} we construct a computable sequence $B=\{B_{x_0x_1x_2}\}_{x_0x_1x_2\in\omega}$ of partial  computable functions such that
   \begin{align*}
  & B_{x_0x_1x_2}(i)=\varphi_{x_i}(i) \mbox{ if } i=0,1,2\\
  & B_{x_0x_1x_2}(k)\uparrow \mbox{ if } k>2.
     \end{align*}
 Then there exists  a computable function $h:\omega^3\to\omega$ such that     $B_{x_0x_1x_2}=\varphi_{h(x_0,x_1,x_2)}$.
 By definition of $B$ we have    $h(x_0,x_1,x_2)\in X_i\leftrightarrow x_i\in X_i$ for $i\leq 2$.
  To finish  the construction of $F_i$ we take $a_i$ and $b_i $   for $i\leq 2$ as follows.

 \begin{align*}
& a_0 \mbox{ is an index of } \bot; \\
& b_0 \mbox{ is an index of  the function } \{<0,0>\}; \\
& a_1 \mbox{ is an index of } \bot; \\
& b_1 \mbox{ is an index of the function}  \{<1,0>\}; \\
& a_2 \mbox{ is an index of } \bot; \\
&  b_2 \mbox{ is an index of the function}  \{<2,0>\};
 \end{align*}

 We define  for $i\leq 2$
  \begin{eqnarray*}
F_i(\bar{x})=\left \{
\begin{array}{lll}
a_i & \mbox{ if }&  h(\bar{x})\in X_i\\
b_i &\mbox{ if }&  h(\bar{x})\not\in X_i.
\end{array}
\right.
\end{eqnarray*}
 By construction $(F_0,F_1,F_2)$ is m-reducible to $(X_0,X_1,X_{2})$. Let us show that
  $W_{F_i(\bar{x})}\neq W_{x_i}$.    Fix $i$. Assume $h(\bar{x})\in X_i$. Since $F_i(\bar{x})=a_i$, $W_{F_i(\bar{x})}=\emptyset$.
  At the same time $W_{x_i}\neq \emptyset$ since $x_i\in X_i$ and $\varphi_{x_i}(i)\downarrow$.
  Assume $h(\bar{x})\not\in X_i$. Since $F_i(\bar{x})=b_i$, $\varphi_{b_i}(i)\downarrow$  and $i\in W_{F_i(\bar{x})}\neq\emptyset$.
  At the same time $\varphi_{x_i}(i)\uparrow$, i.e., $i\not\in W_{x_i}$.

  Therefore $(F_0,F_1,F_2)$ is a required $3$-tuple.

\noindent \textbf{Case 2: l=2.}
First we
take the following  3-tuple:
 \begin{align*}
& Z_0=\{n\mid W_n\cap 3\omega \mbox{ is finite}\},\\
 &Z_1=\{n\mid W_n\cap (3\omega+1) \mbox{ is finite}\},\\
 &Z_2=\{n\mid W_n\cap (3\omega+2) \mbox{ is finite}\}.\\
  \end{align*}
By analogy to the case 1 we chose $h:\omega^3\to\omega$ such that for all $\bar{x}=(x_0,x_1,x_2)$ and every $k\in\omega$
 we have $\varphi_{h(\bar{x})}(3k+1)=\varphi_{x_i}(3k+1)$ for $i\leq 2$.
  To finish  the construction of $F_i$ we take $a_i$ and $b_i $   for $i\leq 2$ as follows.

 \begin{align*}
& a_0 \mbox{ is an index of constant zero function}; \\
& b_0 \mbox{ is an index of } \bot; \\
& a_1 \mbox{ is an index of constant zero function}; \\
& b_1 \mbox{ is an index of } \bot; \\
& a_2 \mbox{ is an index of constant zero function}; \\
&  b_2 \mbox{ is an index of } \bot;
 \end{align*}

 We define  for $i\leq 2$
  \begin{eqnarray*}
F_i(\bar{x})=\left \{
\begin{array}{lll}
a_i & \mbox{ if }&  h(\bar{x})\in Z_i\\
b_i &\mbox{ if }&  h(\bar{x})\not\in Z_i.
\end{array}
\right.
\end{eqnarray*}
 By construction $(F_0,F_1,F_2)$ is m-reducible to $(Z_0,Z_1,Z_{2})$. By analogy to the case~1 we have
  $W_{F_i(\bar{x})}\neq^\ast W_{x_i}$. Therefore $(F_0,F_1,F_2)$ is a required $3$-tuple.

\noindent $3)\to 4)$.  The implication is straightforward since $\varphi_{F_i(\bar{x})}\not\sim_l \varphi_{x_i}$
follows from $W_{F_i(\bar{x})}\not\sim_l W_{x_i}$

  \noindent $4)\to 2)$.  The following construction is uniform for both $l$.
  First for $i\leq 2$ we define  functions  $G_i$ and $T_i$ which are computable with the oracle $z_l$.
  \begin{align*}
  &G_i(\bar{x},\bar{y})=b_i(\bar{y})\mbox{  if only } h(\bar{y})\in W^{z_l}_{x_i}\\
  & T_i(\bar{x},\bar{y})=a_i(\bar{y})\mbox{  if only } h(\bar{y})\in A_i.
  \end{align*}
  By Reduction principle for function graphs \cite{Rogers} we find a    function
  $E_i(\bar{x},\bar{y})$ with the following properties:
  \begin{itemize}
  \item  $E_i(\bar{x},\bar{y})$ is computable with the oracle $z_l$, therefore for  some computable    function      $g_i:\omega^3\to \omega$,     $E_i(\bar{x},\bar{y})=K^{4,z_l}(g_i(\bar{x}),\bar{y})$,
      where $K^{4,z_l}$ is Kleene universal function for 3-arity functions computable with the oracle $z_l$.
  \item If $h(\bar{y})\in W^{z_l}_{x_i}\setminus A_i$ then $E_i(\bar{x},\bar{y})=b_i(\bar{y})$.
  If $h(\bar{y})\in A_i\setminus W^{z_l}_{x_i}$ then $E_i(\bar{x},\bar{y})=a_i(\bar{y})$.
  \end{itemize}
If $l=1$ by Smullyan Theorem \cite{Smullyan} there exist three computable functions
$n_0,\,n_1,\,n_2:\omega^3\to\omega$ such that
  \begin{align*}
  \varphi_{K^{4}(g_i(\bar{x}),n_0(\bar{g}(\bar{x}),n_1(\bar{g}(\bar{x})),n_2(\bar{g}(\bar{x}))))}= \varphi_{n_i(\bar{g}(\bar{x}))},
   \end{align*}


 \noindent  If $l=2$ there exist three computable functions
$n_0,\,n_1,\,n_2:\omega^3\to\omega$ such that
  \begin{align*}
  \varphi_{K^{4,z_2}(g_i(\bar{x}),n_0(\bar{g}(\bar{x}),n_1(\bar{g}(\bar{x})),n_2(\bar{g}(\bar{x}))))}=^\ast \varphi_{n_i(\bar{g}(\bar{x}))},
   \end{align*}
   under  the condition that $K^{4,z_2}(g_i(\bar{x}),\bar{n}(\bar{g}(\bar{x})))\downarrow$
 The last statement requires more details which we show below.
 It is easy to see that, for $i\leq 2 $,  $f_i(\bar{z},\bar{y})\leftrightharpoons K^{4,z_2}(z_i,\bar{y})$ is $\Sigma^o_2$-function. Therefore by Remark~\ref{rmalmosteq} there exist computable functions $\lambda_i$ such that
 for $(\bar{z},\bar{y})\in\dom(f_i)$
  \begin{align*}
  \varphi_{f_i(\bar{z},\bar{y})}=^\ast \varphi_{\lambda_i(\bar{z},\bar{y})}.
  \end{align*}

  By  Smullyan Theorem,
    \begin{align*}
  \varphi_{\lambda_i(\bar{z},\bar{n}(\bar{z}))}= \varphi_{n_i(\bar{z})}
     \end{align*}
     Therefore
    \begin{align*}
  \varphi_{f_i(\bar{z},\bar{n}(\bar{z}))}=^\ast \varphi_{n_i(\bar{z})}
  \end{align*}
 for $(\bar{z},\bar{n}(\bar{z}))\in\dom(f_i)$.
 Now we are ready do define $H:\omega^3\to\omega$:
  \begin{align*}
  H(\bar{x})=h(n_0(\bar{g}(\bar{x})),n_1(\bar{g}(\bar{x})),n_2(\bar{g}(\bar{x}))).
   \end{align*}
   Let us sow that $H(\bar{x})=h(n_0(\bar{g}(\bar{x})),n_1(\bar{g}(\bar{x})),n_2(\bar{g}(\bar{x}) ))$ is a required computable function. Assume contrary that for some $x_0,\, x_1$ and $x_2$

 \[
H(x_0,x_1,x_2)\not\in
\Big( (A_i\cap W^{z_l}_{x_i})\cup (\overline{A_i}\cap \overline{W}^{z_l}_{x_i})
\Big)
\]
for some $i \in \{0, 1, 2 \} $.
  We have two cases:

  \begin{enumerate}
  \item[(a)] $H(\bar{x})\in A_i\setminus W^{z_l}_{x_i}$
  \item[(b)] $H(\bar{x})\in  W^{z_l}_{x_i}\setminus A_i$.
  \end{enumerate}
 It is worth noting that in both cases $K^{4,z_l}(g_i(\bar{x}),\bar{n}(\bar{g}(\bar{x})))\downarrow$.
In the case $(a)$,
 \begin{align*}
& \varphi_{F_i(\bar{n}(\bar{g}(\bar{x})))}=\varphi_{a_i(\bar{n}(\bar{g}(\bar{x})))} \mbox{ by the definition of $F_i$}\\
& \varphi_{a_i(\bar{n}(\bar{g}(\bar{x})))}= \varphi_{K^{4,z_l}(g_i(\bar{x}),\bar{n}(\bar{g}(\bar{x})))} \mbox{ by the definition of $g_i$}\\
& \varphi_{K^{4,z_l}(g_i(\bar{x}),\bar{n}(\bar{g}(\bar{x})))}= \varphi_{n_i(\bar{g}(\bar{x}))}\mbox{ by the choise of $\bar{n}$}
\end{align*}

This contradicts the condition on $F_i$.
In the case $(b)$,
 \begin{align*}
& \varphi_{F_i(\bar{n}(\bar{g}(\bar{x})))}=\varphi_{b_i(\bar{n}(\bar{g}(\bar{x})))} \mbox{ by the definition of $F_i$}\\
& \varphi_{b_i(\bar{n}(\bar{g}(\bar{x})))}=\varphi_{K^{4,z_l}(g_i(\bar{x}),\bar{n}(\bar{g}(\bar{x})))} \mbox{ by the definition of $g_i$}\\
&\varphi_{K^{4,z_l}(g_i(\bar{x}),\bar{n}(\bar{g}(\bar{x})))}= \varphi_{n_i(\bar{g}(\bar{x}))}\mbox{ by the choise of $\bar{n}$}
\end{align*}

This contradicts the condition on $F_i$.
Therefore $H$ is a required productive function.
\end{proof}
\section{  Index sets that we need}\label{index_sets}

Let us fix the following index sets:
\begin{align*}
&E_0=\{n\mid \pi_n\subseteq \{0,1,2\}\wedge\big((W_n \mbox{ is  finite})\vee \varphi_n\cap  \omega\times\{0\}=^\ast\emptyset\big)\};\\
&E_1=\{n\mid \pi_n\subseteq \{0,1,2\}\wedge\big((W_n \mbox{ is  finite})\vee \varphi_n\cap  \omega\times\{1\}=^\ast\emptyset\big)\};\\
&E_2=\{n\mid \pi_n\subseteq \{0,1,2\}\wedge\big((W_n \mbox{ is  finite})\vee \varphi_n\cap  \omega\times\{2\}=^\ast\emptyset\big)\}.
\end{align*}
Using Proposition~\ref{criteria_m_compl} we show that  the 3-tuple $(E_1,E_1,E_2)$  is m-complete in the class of  $3$  -tuples of $\Sigma^0_2$-set.

In order to define $(F_0,F_1,F_2)$ we first take  a computable function $h:\omega^3\to\omega$ such that

 \begin{eqnarray*}
\varphi_{h(x_0,x_1,x_2)}(3k)=\left \{
\begin{array}{ll}
0 & \mbox{ if }  \varphi_{x_0}(k)\downarrow\\
\uparrow &\mbox{ otherwise},
\end{array}
\right.
\end{eqnarray*}

 \begin{eqnarray*}
\varphi_{h(x_0,x_1,x_2)}(3k+1)=\left \{
\begin{array}{ll}
1 & \mbox{ if }  \varphi_{x_1}(k)\downarrow\\
\uparrow &\mbox{ otherwise},
\end{array}
\right.
\end{eqnarray*}

 \begin{eqnarray*}
\varphi_{h(x_0,x_1,x_2)}(3k+2)=\left \{
\begin{array}{ll}
2 & \mbox{ if }  \varphi_{x_2}(k)\downarrow\\
\uparrow &\mbox{ otherwise}.
\end{array}
\right.
\end{eqnarray*}
 Now we pick up appropriate functions $a_i$ and $b_i$ for $i\leq 2$.
The functions $b_i$ is defined by $\varphi_{b_i(\bar{x})}(k)=\varphi_{x_i}(k)+1$, $a_i$ is an index of $\rm{id}$ function.
Then we define  for $i\leq 2$
  \begin{eqnarray*}
F_i(\bar{x})=\left \{
\begin{array}{lll}
a_i & \mbox{ if }&  h(\bar{x})\in E_i\\
b_i(\bar{x}) &\mbox{ if }&  h(\bar{x})\not\in E_i.
\end{array}
\right.
\end{eqnarray*}
Let us show that $\varphi_{F_i(\bar{x})}\not\sim_2 \varphi_{x_i}$ for $i\leq 2$.
 Without loss of generality it is sufficient to consider $i=0$.
Assume $h(\bar{x})\not\in E_0$. Then $W_{h(\bar{x})}$ is infinite and infinitely often a value of $\varphi_{h(\bar{x})}$ is  zero.
By   construction, $W_{x_0}$ is infinite. Since for $k\in\omega$,
\begin{align*}
\varphi_{F_0(\bar{x})}(k)=\varphi_{b_0(\bar{x})}(k)=\varphi_{x_0}(k)+1,
\end{align*}
 there exist infinitely many $k\in\omega$ such that $\varphi_{F_0(\bar{x})}(k)\neq\varphi_{x_0}(k)$. As a corollary,
$\varphi_{F_0(\bar{x})}\neq^\ast\varphi_{x_0}$.
Assume $h(\bar{x})\in E_0$. Then  $\varphi_{x_0}$ is a finite function. Since for $k\in\omega$,
\begin{align*}
\varphi_{F_0(\bar{x})}(k)=\varphi_{a_0(\bar{x})}(k)= k,
\end{align*}
 we have $\varphi_{F_0(\bar{x})}\neq^\ast\varphi_{x_0}$.
\begin{lemma}

Let a $3$-tuple $(E_1,E_1,E_2)$ be m-complete in the class of  $3$-tuples of $\Sigma^0_2$-set.
Then the following 3-tuple
\begin{align*}
& Y_0=E _1\cap E_1\cap E_2\\
& Y_1= E_1\cap E_2\\
&Y_2=E_2\\
\end{align*}
is  m-complete in the class of  $3$-tuples  $(X_0,X_1,X_2)$ of $\Sigma^0_2$-set with the additional condition
$X_0\subseteq X_1\subseteq X_2$.
\end{lemma}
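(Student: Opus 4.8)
The plan is to obtain the $m$-completeness of $(Y_0,Y_1,Y_2)$ in the restricted class as an immediate consequence of the $m$-completeness of $(E_0,E_1,E_2)$ in the full class of $3$-tuples of $\Sigma^0_2$-sets (established in the paragraph preceding the lemma via Proposition~\ref{criteria_m_compl}), the point being that the embedding condition on the \emph{source} triples is exactly what lets us absorb the intersections defining $Y_1=E_1\cap E_2$ and $Y_0=E_0\cap E_1\cap E_2$ back to the intended components. First I would record that $(Y_0,Y_1,Y_2)$ is itself a legitimate member of the restricted class: each $Y_i$ is a finite intersection of $\Sigma^0_2$-sets and hence $\Sigma^0_2$, and $Y_0=E_0\cap E_1\cap E_2\subseteq E_1\cap E_2=Y_1\subseteq E_2=Y_2$, so the embedding property holds.

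Then I would take an arbitrary $3$-tuple $(X_0,X_1,X_2)$ of $\Sigma^0_2$-sets satisfying $X_0\subseteq X_1\subseteq X_2$ and look for a single computable $h:\omega\to\omega$ realising $(X_0,X_1,X_2)\leq_m(Y_0,Y_1,Y_2)$, i.e.\ with $x\in X_i\leftrightarrow h(x)\in Y_i$ for $i\leq 2$. By $m$-completeness of $(E_0,E_1,E_2)$ in the full class there is a computable $h$ with $x\in X_i\leftrightarrow h(x)\in E_i$ for every $i\leq 2$ and every $x$, and I claim this same $h$ already works for $(Y_0,Y_1,Y_2)$: we have $h(x)\in Y_2=E_2\leftrightarrow x\in X_2$ directly; $h(x)\in Y_1=E_1\cap E_2\leftrightarrow(x\in X_1\wedge x\in X_2)\leftrightarrow x\in X_1$, using $X_1\subseteq X_2$; and $h(x)\in Y_0=E_0\cap E_1\cap E_2\leftrightarrow(x\in X_0\wedge x\in X_1\wedge x\in X_2)\leftrightarrow x\in X_0$, using $X_0\subseteq X_1\subseteq X_2$. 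Since $(X_0,X_1,X_2)$ was an arbitrary admissible triple and $(Y_0,Y_1,Y_2)$ is admissible, this is precisely the required $m$-completeness.

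I do not anticipate a genuine obstacle here: the whole argument is bookkeeping with the embedding property, and the only nontrivial ingredient, the $m$-completeness of $(E_0,E_1,E_2)$ in the full class, is already in hand (it rests on the triple $(F_0,F_1,F_2)$ constructed just before the lemma together with Proposition~\ref{criteria_m_compl}(4)). Should one prefer the productive-function formulation of $m$-completeness inside the restricted class, I would instead start from the productive function $H$ for $(E_0,E_1,E_2)$ furnished by Proposition~\ref{criteria_m_compl}(2) and precompose with the same reindexing, observing that an index for an admissible target triple is carried to an index of the form needed by $(Y_0,Y_1,Y_2)$; but this variant adds nothing essential.
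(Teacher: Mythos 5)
Your argument is correct and is exactly the routine verification the paper intends: the paper states this lemma without proof (modulo the obvious typos $E_1\cap E_1\cap E_2$ for $E_0\cap E_1\cap E_2$, which you read correctly), and the single reduction $h$ from the completeness of $(E_0,E_1,E_2)$ in the full class does carry over to $(Y_0,Y_1,Y_2)$ precisely because the nesting $X_0\subseteq X_1\subseteq X_2$ lets the extra conjuncts be absorbed. No gap.
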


 \noindent By the choice of $(E_0,E_1,E_2)$  at the beginning of this section we have $Y_0\subseteq Y_1\subseteq Y_2$, where
\begin{align*}
&Y_0=\{n\mid \pi_n\subseteq \{0,1,2\}\wedge W_n \mbox{ is  finite}\};\\
&Y_1=\{n\mid \pi_n\subseteq \{0,1,2\}\wedge\big(W_n \mbox{ is  finite}\vee
(\exists N)(\forall k\geq N)(\varphi_n(k)\downarrow\,\rightarrow\, \varphi_n(k)=0
\};\\
&Y_2=\{n\mid \pi_n\subseteq \{0,1,2\}\wedge\big(W_n \mbox{ is  finite}\vee \varphi_n\cap  \omega\times\{2\}=^\ast\emptyset\big)\}.
\end{align*}

\begin{remark}
It is worth noting that if 3-tuple $(Z_0,Z_1,Z_2)$ is $m$-complete in  the class of 3-tuple  $(X_0,X_1,X_2)$
of $\Sigma^0_2$-sets with the additional condition $X_0\subseteq X_1\subseteq X_2$ then the set
$Z_0\cup(Z_2\setminus Z_1)\not\in \Sigma^0_2$ since this combination is the $m$-greatest among the similar combinations of $\Sigma^0_2$-sets.
\end{remark}

\section{Classes $K$    without computably presentable $\KR$}\label{without_constr}

Below we list important restrictions on a class $K$ of total computable numerical functions:

\begin{enumerate}
\item Among the basic functions $\BF$ the class contains $+,\dotminus,\cdot, [\frac{x}{2}]$ and
$[\sqrt x]$.

\item  The class has a computable universal function  for all unary functions i.e. the sequence $\{\mathcal{F}_n\}_{n\in\omega}$ of all unary functions from $ K$ is computable.
\item There exists a computable function $H:\omega\to\omega$ such that for all $n\in\omega$
$\im(\mathcal{F}_{H(n)})= \{0\}\cup \{x+1\mid x\in W_n \}$. Moreover, for all $i\in W_n$,
$\mathcal{F}^{-1}_{H(n)}(i+1)$ is an infinite set.
\item The class is closed under superposition and either under the standard bounded recursion scheme or the following   recursion scheme 2:
If $\alpha\,\,g,\psi\in K$ and $f$ is defined by

  \begin{eqnarray*}
f(\bar{x},y)=\left \{
\begin{array}{lll}
\alpha(\bar{x})&\mbox{ if }& y=0\\
\psi(\bar{x},y,f(\bar{x},[\frac{y}{2}])) &\mbox{ if }& y\geq 1
\end{array}
\right.
\end{eqnarray*}
 and

\begin{align*}
f(\bar{x},y)\leq g(\bar{x},y)
\end{align*}
then $f\in K$.
\end{enumerate}

\begin{remark}\label{r_2}
It is worth noting that
 the class $K$ satisfying the restrictions above  contains all almost  constant functions,
  Cantor 3-tuple $(c,l,r)$, $\rm{sg}|x-y|$.

 \end{remark}
\begin{theorem}Let $K$  satisfy the requirements above and $\KR$ be a structure generated by $\K$.
 Then $\KR$ does not have a computable presentation.
\end{theorem}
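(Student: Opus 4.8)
The plan is to assume that $\KR$ possesses a computable presentation and to reach a contradiction through Theorem~\ref{sigma02construct}. Since the restricted class $\K$ contains all almost constant functions (Remark~\ref{r_2}), Theorem~\ref{sigma02construct} applies and gives $A_0\cup(A_2\setminus A_1)\in\Sigma^0_2$; so it suffices to prove that this set is \emph{not} $\Sigma^0_2$. I will get this by showing that the $3$-tuple $(A_0,A_1,A_2)$ is $m$-complete in the class of all $3$-tuples of $\Sigma^0_2$-sets with the embedding property $X_0\subseteq X_1\subseteq X_2$, and then invoking the concluding remark of Section~\ref{index_sets}, which identifies $Z_0\cup(Z_2\setminus Z_1)$ for such an $m$-complete tuple as the $m$-greatest combination of its kind, hence not $\Sigma^0_2$. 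That $(A_0,A_1,A_2)$ itself belongs to the class is easy: each $A_i$ is $\Sigma^0_2$, $A_0\subseteq A_1$ is trivial, and $A_1\subseteq A_2$ because $\pi_n\subseteq\{0,1,2\}$ together with $\varphi_n\in\AC$ forces $\overline{\varphi_n}$ to be a dyadic rational in $[-1,1]$, which lies in $\KR$ as $\AC\subseteq\K$.

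For the $m$-completeness, by the $m$-completeness of the tuple $(Y_0,Y_1,Y_2)$ in the class of $\Sigma^0_2$-tuples with the embedding property --- established in Section~\ref{index_sets} --- together with transitivity of $m$-reducibility of tuples (cf.\ Lemma~\ref{lem_m_reduc}), it is enough to construct a single total computable $h\colon\omega\to\omega$ with $n\in Y_i\Leftrightarrow h(n)\in A_i$ for $i=0,1,2$. Recall that, modulo the common conjunct $\pi_n\subseteq\{0,1,2\}$, membership in $Y_0,Y_1,Y_2$ says respectively that $W_n$ is finite, that $\varphi_n$ is eventually $0$ on its domain, and that $\varphi_n$ attains the value $2$ only finitely often. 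The index $h(n)$ will be produced uniformly from $n$ using the hypotheses on $\K$: requirements~(2) and~(3) jointly yield a $\K$-function $\mathcal F_{H(n)}$, computable uniformly in $n$, whose range is $\{0\}\cup(W_n+1)$ with all fibres over $W_n+1$ infinite, and requirements~(1) and~(4) --- the basic functions $+,\dotminus,\cdot,[x/2],[\sqrt x]$, superposition, and the bounded (halving) recursion scheme --- let one define, uniformly, a partial computable $\varphi_{h(n)}$ that "replays" the values of $\varphi_n$ along an enumeration of $W_n$ read off from $\mathcal F_{H(n)}$, at a carefully calibrated pace, emitting only digits in $\{0,1,2\}$ so long as $\pi_n\subseteq\{0,1,2\}$ and a value $\geq 3$ as soon as $\varphi_n$ is seen to output such a value (so that the conjunct $\pi\subseteq\{0,1,2\}$ is matched on both sides). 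The construction will ensure, assuming $\pi_n\subseteq\{0,1,2\}$, the three equivalences: (i) $\varphi_{h(n)}$ is non-total iff $W_n$ is finite; (ii) $\varphi_{h(n)}\in\AC$ iff $\varphi_n$ is eventually $0$ on its domain; (iii) $\overline{\varphi_{h(n)}}\in\KR$ iff $\varphi_n$ attains $2$ only finitely often. These are exactly what is needed, since (i) matches $A_0$ with $Y_0$ and, given (i), (ii) matches $A_1$ with $Y_1$ and (iii) matches $A_2$ with $Y_2$ (when $W_n$ is finite, $\varphi_{h(n)}$ is non-total and $h(n)\in A_0\subseteq A_1\subseteq A_2$, consistently with $n\in Y_0\subseteq Y_1\subseteq Y_2$).

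Equivalences (i) and (ii) are bookkeeping about whether the replayed sequence terminates and whether it becomes eventually constant. The main obstacle is (iii), and within it the implication ``$\varphi_n$ attains $2$ infinitely often $\Rightarrow\overline{\varphi_{h(n)}}\notin\KR$'': here one has to exclude \emph{every} fast Cauchy name for $\overline{\varphi_{h(n)}}$ coming from a function of $\K$, not merely the naive one, and this is precisely where the smallness of $\K$ enters. The pace in the definition of $\varphi_{h(n)}$ --- arranged through $\mathcal F_{H(n)}$ and the infiniteness of its fibres --- must be calibrated so that approximating $\overline{\varphi_{h(n)}}$ to precision $2^{-p}$ forces one to locate and evaluate $\varphi_n$ at an element of $W_n$ whose enumeration stage exceeds every bound obtainable from the basic functions by superposition and bounded (halving) recursion: $\K$ admits no unbounded search and only logarithmic-depth recursion, so its resource bound at position $p$ cannot keep pace with infinitely many such ``expensive'' digits, whence no $\K$-name for $\overline{\varphi_{h(n)}}$ can exist; while in the opposite case, past the last occurrence of the digit $2$ the real is determined by a dyadic rational and so lies in $\KR$. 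Making this work \emph{uniformly} over an arbitrary enumeration of $W_n$ --- so that the digit positions carrying the $2$'s are at once sparse enough to be unreachable by $\K$ in the infinite case and inert enough to leave a $\Dyad$-approximable real in the finite case --- is the technical core, and the positive hypotheses~(2) and~(3) (the universal function, the function $H$ with infinite fibres) are what permit both the uniform construction of $h$ and the diagonalisation against the countable family of potential $\K$-names. Once (i)--(iii) are secured, $(A_0,A_1,A_2)$ is $m$-complete in the stated class, hence $A_0\cup(A_2\setminus A_1)\notin\Sigma^0_2$, contradicting Theorem~\ref{sigma02construct}; therefore $\KR$ has no computable presentation.
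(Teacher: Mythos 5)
Your top-level architecture coincides with the paper's: assume a computable presentation, invoke Theorem~\ref{sigma02construct} to get $A_0\cup(A_2\setminus A_1)\in\Sigma^0_2$, and contradict this by reducing the $m$-complete embedded tuple $(Y_0,Y_1,Y_2)$ of Section~\ref{index_sets} to $(A_0,A_1,A_2)$ by a single computable function. The gap is that this reduction is the entire content of the theorem, and the two ideas you offer for its hardest parts do not work. For the implication ``$\varphi_n$ attains $2$ infinitely often $\Rightarrow\overline{\varphi_{h(n)}}\notin\KR$'' you propose a resource-bound argument: pace the digits so that no function of $\K$ can ``keep up'', because $\K$ ``admits no unbounded search and only logarithmic-depth recursion''. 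But conditions (1)--(4) are closure conditions from below; they impose no upper bound on the growth or complexity of the members of $\K$ (a class containing all functions elementary in some fixed, arbitrarily fast-growing function can satisfy them), so there is no ``resource bound at position $p$'' to outpace. The hypothesis that actually does this job is (2), the computable enumeration $\{\mathcal F_k\}$ of the unary functions of $\K$, used for explicit diagonalization: each time a new pair $\langle i,2\rangle$ is enumerated, one appends a block of digits forcing $\overline{F_n}\neq\overline{\mathcal F_k}$ for every $k\le i$ (this is what the table in the paper's construction achieves); since such $i$ are unbounded, $\overline{F_n}$ avoids all of $\widetilde{\KR}$.

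Second, for $n\in Y_2\setminus Y_1$ you must simultaneously keep $\varphi_{h(n)}\notin\AC$ (so that $h(n)\notin A_1$) and $\overline{\varphi_{h(n)}}\in\KR$ (so that $h(n)\in A_2$). Your justification for the latter --- that past the last occurrence of the digit $2$ the real ``is determined by a dyadic rational'' --- is incompatible with the former: a tail over the digits $\{0,1\}$ (values $-1$ and $0$) that is not eventually constant is, up to sign, a non-eventually-constant binary expansion and hence represents a \emph{non}-dyadic real. So membership in $\KR$ cannot be bought with $\AC\subseteq\K$ here; one must prove that the constructed function $F_n$ itself lies in $\K$. This is where conditions (1) and (4) are genuinely used: the paper places the $0/1$ flips along the sparse set $\{m_n\cdot 2^i\mid i\ge 0\}$ and verifies $F_n\in\K$ via the halving recursion scheme together with auxiliary $\K$-functions ($g$, $S$, $I$). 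Without these two pieces the equivalences $f^{-1}(A_1)=Y_1$ and $f^{-1}(A_2)=Y_2$ are not established and the contradiction with Theorem~\ref{sigma02construct} is not reached.
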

 The proof follows from Theorem~\ref{sigma02construct}   and the claim $A_0\cup(A_2\setminus A_1)\not\in \Sigma^0_2$ which  is based on the following proposition and Section~\ref{index_sets}.

\begin{proposition}
 $(Y_0,Y_1,Y_2)\leq_m (A_0,A_1,A_2)$.

\end{proposition}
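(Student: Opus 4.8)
The plan is to construct a single computable $h\colon\omega\to\omega$ with $n\in Y_i\Leftrightarrow h(n)\in A_i$ for $i=0,1,2$; since then $Y_0\cup(Y_2\setminus Y_1)=h^{-1}\bigl(A_0\cup(A_2\setminus A_1)\bigr)$, combining this with the $m$-completeness of $(Y_0,Y_1,Y_2)$ from Section~\ref{index_sets} and with Theorem~\ref{sigma02construct} will give the Theorem of this section. Fix $n$ and enumerate $\mathrm{graph}(\varphi_n)$ as pairs $(w_0,v_0),(w_1,v_1),\dots$. From $n$ one computes indices $n',n''$ with $W_{n'}=\{k\mid\text{at least }k{+}1\text{ of the }v_j\text{ are}\neq0\}$ and $W_{n''}=\{k\mid\text{at least }k{+}1\text{ of the }v_j\text{ equal }2\}$; both are c.e.\ initial segments of $\omega$, $W_{n''}\subseteq W_{n'}$, and $W_{n'}=\omega$ (resp.\ $W_{n''}=\omega$) exactly when $\varphi_n$ takes a nonzero value (resp.\ the value $2$) infinitely often. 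Applying requirement~(3) to $n'$ and $n''$ I get total functions $\mathcal F_{H(n')},\mathcal F_{H(n'')}\in\K$ whose images are $\{0\}\cup\{k{+}1\mid k\in W_{n'}\}$, resp.\ $\{0\}\cup\{k{+}1\mid k\in W_{n''}\}$, every nonzero value occurring infinitely often. Using $\mathrm{sg}|x-y|\in\K$ and bounded recursion I then put $g_1\in\K$ to be $g_1(m)=2$ if $\mathcal F_{H(n')}(m)\notin\{\mathcal F_{H(n')}(j)\mid j<m\}$ and $g_1(m)=1$ otherwise; thus $g_1$ takes values in $\{1,2\}$, and $g_1\in\AC$ precisely when $\mathrm{im}(\mathcal F_{H(n')})$ is finite, i.e.\ when $W_{n'}$ is finite, i.e.\ when $\varphi_n$ is eventually $0$ on its domain.

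Next I would define $\varphi_{h(n)}(m)$ as follows. First wait until at least $m{+}1$ pairs have appeared in $\mathrm{graph}(\varphi_n)$; diverge if this never happens (this already makes $\varphi_{h(n)}$ total iff $W_n$ is infinite). If among those pairs some $v_j\notin\{0,1,2\}$, output $3$; then $\pi_{h(n)}\not\subseteq\{0,1,2\}$, so $h(n)\notin A_i$ for all $i$, matching $n\notin Y_i$, and I may assume from now on $\pi_n\subseteq\{0,1,2\}$. Otherwise compute $a=\mathcal F_{H(n'')}(m)$: if $a=0$, output $g_1(m)$; if $a=k{+}1$, compute the "$k$-th window'' $\Delta_k$ — the first six elements of $\mathcal F_{H(n'')}^{-1}(k{+}1)$ lying above $\max\Delta_{k-1}$ (a terminating search, since $a=k{+}1$ forces $k\in W_{n''}$, hence $0,\dots,k\in W_{n''}$ and all these preimages are infinite) — and output the diagonalisation digit described below if $m\in\Delta_k$, and $g_1(m)$ otherwise. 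Note the windows $\Delta_0<\Delta_1<\dots$ occupy pairwise disjoint, increasing blocks of positions, and $\varphi_{h(n)}$ differs from $g_1$ only inside $\bigcup_k\Delta_k$, which is finite exactly when $W_{n''}$ is finite.

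For the diagonalisation at $\Delta_k=\{q_0<\dots<q_5\}$: when I reach $q_0$ all digits of $\varphi_{h(n)}$ below $q_0$ are already determined, and since $g_1\in\K$ I can also compute exactly the (finitely many) $g_1$-governed digits between $q_0$ and $q_5$; let $c$ be the total contribution of these known digits to $\overline{\varphi_{h(n)}}$. I evaluate $\mathcal F_k$ on $0,\dots,q_5{+}6$; if some value there is $\notin\{0,1,2\}$ then $\mathcal F_k\notin\K^{\{0,1,2\}}$ and there is nothing to do, otherwise I form the approximation $\tilde y=\sum_{i\le q_5+6}(\mathcal F_k(i)-1)/2^{i+1}$ of $\overline{\mathcal F_k}$. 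Setting all six window digits to $2$ adds $\delta=\sum_{i=0}^{5}2^{-(q_i+1)}\ge 2^{-(q_0+1)}\ge 32\cdot 2^{-(q_5+1)}$ to $c$, while all undetermined digits (positions $>q_5$) together contribute at most $2^{-(q_5+1)}$ in absolute value; so if $\tilde y\le c$ I set the window to $2$'s, getting $\overline{\varphi_{h(n)}}\ge c+\delta-2^{-(q_5+1)}$, which exceeds $\overline{\mathcal F_k}\le\tilde y+2^{-(q_5+6)}\le c+2^{-(q_5+6)}$; and if $\tilde y>c$ I set the window to $0$'s, getting $\overline{\varphi_{h(n)}}\le c-\delta+2^{-(q_5+1)}<\overline{\mathcal F_k}$. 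Either way $\overline{\varphi_{h(n)}}\neq\overline{\mathcal F_k}$ regardless of all later digits. This is essentially the standard argument that a carefully built real avoids a given uniformly computable sequence; the genuinely delicate part — and the main obstacle — is bookkeeping: keeping the windows disjoint and increasing, making "$m\in\Delta_k$'' decidable during the computation of $\varphi_{h(n)}(m)$, and arranging that each window computation is a terminating search exactly when the corresponding objective is active.

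Finally I would verify $n\in Y_i\Leftrightarrow h(n)\in A_i$ by cases. If $W_n$ is finite, $\varphi_{h(n)}$ is not total, so $h(n)\in A_0\subseteq A_1\subseteq A_2$, matching $n\in Y_0\subseteq Y_1\subseteq Y_2$. Now suppose $W_n$ is infinite, so $\varphi_{h(n)}$ is total and $n\notin Y_0$, $h(n)\notin A_0$. If $\varphi_n$ is eventually $0$ on its domain, then $W_{n'}$ and hence $W_{n''}$ are finite, so $g_1\in\AC$ and $\varphi_{h(n)}$ differs from $g_1$ at finitely many positions; hence $\varphi_{h(n)}\in\AC$, and $\overline{\varphi_{h(n)}}=\overline{g'}$ where $g'\in\K^{\{0,1,2\}}$ is $g_1$ with those finitely many values overwritten (a $\K$-function, by $\mathrm{sg}|x-y|\in\K$ and closure under $+,\cdot,\dotminus$), so $\overline{\varphi_{h(n)}}\in\widetilde{\KR}\subseteq\KR$; thus $h(n)\in A_1\cap A_2$, matching $n\in Y_1\cap Y_2$. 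If $\varphi_n$ takes a nonzero value infinitely often but the value $2$ only finitely often, then $W_{n'}=\omega$ while $W_{n''}$ is finite, so $g_1\notin\AC$ and $\varphi_{h(n)}$ is a finite modification of $g_1$: it is total, not in $\AC$, but still $\overline{\varphi_{h(n)}}\in\widetilde{\KR}$, so $h(n)\in A_2\setminus A_1$, matching $n\in Y_2\setminus Y_1$. If $\varphi_n$ takes the value $2$ infinitely often, then $W_{n''}=\omega$, every $\Delta_k$ is nonempty, and the diagonalisation gives $\overline{\varphi_{h(n)}}\neq\overline{\mathcal F_k}$ for all $k$ with $\mathcal F_k\in\K^{\{0,1,2\}}$, i.e.\ $\overline{\varphi_{h(n)}}\notin\widetilde{\KR}=\KR\cap[-1,1]$; since $\varphi_{h(n)}$ is total, $h(n)\notin A_2$, hence $h(n)\notin A_1,A_0$, matching $n\notin Y_2\supseteq Y_1\supseteq Y_0$. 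This exhausts all cases, establishing $(Y_0,Y_1,Y_2)\leq_m(A_0,A_1,A_2)$.
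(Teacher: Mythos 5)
Your high-level strategy coincides with the paper's: reduce everything to a single computable map, use a ``background'' digit stream that is almost constant exactly when $\varphi_n$ is eventually $0$ on its domain, and open diagonalisation windows against the enumeration $\{\mathcal{F}_k\}$ exactly when $\varphi_n$ takes the value $2$; your window arithmetic (the gap $\delta\ge 32\cdot 2^{-(q_5+1)}$ against the tail $2^{-(q_5+1)}$) is sound, and the final case analysis correctly matches $Y_i$ with $A_i$ in every case \emph{except one}.

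The genuine gap is the claim $g_1\in\K$, and it sits exactly at the point the paper's elaborate construction is designed to handle. Your $g_1(m)$ is defined by testing whether $\mathcal{F}_{H(n')}(m)$ occurs among $\mathcal{F}_{H(n')}(0),\dots,\mathcal{F}_{H(n')}(m-1)$, i.e.\ it needs a first-occurrence (running maximum) test over all $j<m$. Requirement~(4) only guarantees closure under the standard bounded recursion scheme \emph{or} under recursion scheme~2, in which $f(\bar{x},y)$ may depend on $f(\bar{x},[\frac{y}{2}])$ only, hence on logarithmically many earlier values. For the classes that carry the paper's headline corollaries --- $\Pt$ and $\SPt$, where cost is measured in the \emph{length} of the argument --- scanning all $j<m$ costs on the order of $m=2^{|m|}$ steps, so your $g_1$ is not in $\K$ and your appeal to ``bounded recursion'' does not justify it. This does not hurt the case $n\in Y_1\setminus Y_0$ (there you only need $g_1\in\AC$, not $g_1\in\K$), but it breaks the case $n\in Y_2\setminus Y_1$: to get $h(n)\in A_2$ you must exhibit a function of $\K^{\{0,1,2\}}$ whose associated real equals $\overline{\varphi_{h(n)}}$, and your only candidate is a finite modification of $g_1$. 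The paper avoids this by confining all value changes of $F_n$ to the sparse set $\{m_n\cdot 2^i\mid i\ge 0\}$ and propagating the state via $F_n([\frac{x}{2}])$ and the auxiliary function $I(x)$, which is precisely what makes the eventual behaviour of $F_n$ witnessable inside $\K$ under recursion scheme~2. To repair your argument you would need to replace $g_1$ by a background stream whose ``toggle'' decisions are recoverable from $O(\log m)$ recursive calls, which is essentially the paper's construction.
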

 \begin{proof}
 We are going to construct a computable function $f$ such that $n\in Y_i\leftrightarrow f(n)\in A_i$. In order to do that we will construct a
  computable sequence $\{F_n\}_{n \in \omega}$ of computable functions
  by steps and then effectively find a  required reduction  $f$.

We take
\begin{itemize}
\item  a standard computable reduction function $\alpha:\omega\to\omega$ for $\rm{Fin}\leq_m \omega\setminus\rm{Tot}$ (c.f.  \cite{Shoenfield}) with the following  properties:
    \begin{itemize}
 \item if  $W_n$ is finite then $W_{\alpha(n)}$ is finite;
 \item if  $W_n$ is infinite then $\varphi_{\alpha(n)}$ is total;
\item  $\pi_n=\pi_{\alpha(n)}$;
\item If $(\exists^\infty a)\,\varphi_n(a)=x$ then $(\exists^\infty b)\,\varphi_{\alpha(n)}(b)=x$ and vice versa.
 \end{itemize}
\item a  computable function $t$ such that
\begin{itemize}
\item $W_{t(n)}= \{<k,d>\mid \varphi_{\alpha(n)}(k)=d \}$.
\end{itemize}
\end{itemize}



Now we point out the requirements on a step $s$  which we want to meet in our construction:
\begin{itemize}
\item $F^{s+1}_n \supseteq F^{s}_n$
\item $\dom(F^s_n)=[0,\dots,m^s_n]$ is a proper  initial segment of $\omega$.
\item
 If $\mathcal{F}_{H(t_s(n))}(s)=<c,d>+1\wedge d>2$ then
 $(\exists j)\, F_n(j)>2$.
  \item If  $\mathcal{F}_{H(t_s(n))}(s+1)=<i,2>+1$
 then in the process of the construction we provide the following:
 $F_n\not\in\{\mathcal{F}_0,\dots,\mathcal{F}_i\}$.
\item $F_n=\bigcup_{s\in\omega} F^s_n$.
  \end{itemize}

\noindent \noindent  W.l.o.g. we assume now that K is closed under the recursion scheme 2 since the case
when K is closed under the primitive recursion is much more easy so it is left to a reader.

  \noindent {\bf Description of a construction of $m^s_n$, $\{F^s_n\}_{n,s\in\omega}$, $t_s(n)$ and $I^{s}_n$}:

\noindent{\bf Step 0}

$m^n_0=[0]$, $F^0_n(0)=0$, $t_0(n)=t(n)$ and $I^0_n=0$.

\noindent{\bf Step s+1}

\noindent {\bf Case 1} If  $\mathcal{F}_{H(t_s(n))}(s+1)=<i,0>+1$  or $\mathcal{F}_{H(t_s(n))}(s+1)=0$  then we proceed as follows:

  \begin{eqnarray*}
m^{s+1}_n=\left \{
\begin{array}{lll}
m^s_n &\mbox{ if }& m^s_n>0\\
1 &\mbox{ if }& m^s_n=0
\end{array}
\right.
\end{eqnarray*}
and for all $j \leq m^s_n$, $F^{s+1}_n (j) = F^s_n (j)$,  for all $m^s_n \leq j\leq m^{s+1}_n$ we put  $F^{s+1}_n(j)= F^{s}_n(m^s_n)$, $t_{s+1}(n)=t_s(n)$ and $I^{s+1}_n=I^{s}_n$.

\noindent {\bf Case 2}  $\mathcal{F}_{H(t_s(n))}(s+1)=<i,1>+1$ we consider the following subcases:

\begin{enumerate}
\item[Subcase 2.1] $i> s+1$.   We proceed as in Case~1.
\item[Subcase 2.2]$i\leq s+1$ and  $i\leq I^{s}_n$.   We proceed as in Case~1.
\item[Subcase 2.3] $ I^{s}_n < i\leq s+1$
 We proceed as follows:
  \begin{eqnarray*}
m^{s+1}_n=\left \{
\begin{array}{lll}
m^s_n &\mbox{ if }& m^s_n>0\\
1 &\mbox{ if }& m^s_n=0
\end{array}
\right.
\end{eqnarray*}
and for all $j \leq m^s_n$, $F^{s+1}_n (j) = F^s_n (j)$, for all $m^s_n \leq j< m^{s+1}_n$ we put  $F^{s+1}_n(j)= F^{s}_n(m^s_n)$ and for $F^{s+1}_n(m^{s+1}_n)$ we chose the least value
from $\{0,1\}$ such that $F^{s+1}_n(m^{s+1}_n)\neq F^{s}_n(m^s_n)$, $t_{s+1}(n)=t_s(n)$ and $I^{s+1}_n=i$.
\end{enumerate}

\noindent {\bf Case 3}
 If  $\mathcal{F}_{H(t_s(n))}(s+1)=<i,2>+1$  then for all $j \leq m^s_n$, $F^{s+1}_n (j) = F^s_n (j)$ and  we proceed as follows:
$m^{s+1}_n=m^s_n+2(i+1)$ and for $k\leq i$   we chose the  values  of $F^{s+1}_n$ on the arguments $m^n_{s}+2k+1$ and $m^n_{s}+2k+2$ according with the following table:
\medskip

\begin{tabular}{ |p{2.5cm}|p{2.5cm}|p{2.7cm}| p{2.7cm}| }

%
$\mathcal{F}_k(m^s_n+2k+1)$& $\mathcal{F}_k(m^s_n+2k+2)$& $F^{s+1}_n(m^n_{s}+2k+1)$ & $F^{s+1}_n(m^n_{s}+2k+2)$\\
\hline
2& 2 &0&0\\
\hline
2& 0 &0&0\\
\hline
2& 1 &0&0\\
\hline
1& 2 &0&0\\
\hline
1& 1 &0&0\\
\hline
1& 0 &2&2\\
\hline
0& 2 &2&2\\
\hline
0& 1 &2&2\\
\hline
0& 0 &2&2\\
\hline
\end{tabular}
\medskip

\noindent The equality
$W_{t_{s+1} (n) } = W_{t_s(n)} \setminus \{<i,2>\}$
defines the value of $t_{s+1}(n)$ and  $I^{s+1}_n = I^s_n$.

\noindent {\bf Case 4}
 If  $\mathcal{F}_{H(t_s(n))}(s+1)=<i,d>+1$ and $d>2$  then  for all $j \leq m^s_n$, $F^{s+1}_n (j) = F^s_n (j)$ and  we proceed as  follows:
$ m^{s+1}_n = m^s_n +1$, $F^{s+1}_n (  m^{s+1}_n  ) = d$,  $t_{s+1}(n)=t_s(n)$
 and  $I^{s+1}_n = I^s_n$.


 We put $F_n=\bigcup_{s\in\omega} F^s_n$ and effectively find  computable function $f:\omega\to\omega$ such that
 %
 $\varphi_{f (n)}  = (F_n \cap (W_ {\alpha (n)}  \times \omega ) ) \cup \{ <x,d> | F_n(x)=d \wedge d>2 \}$.

  Now we show that $f$ is a required reduction.


If $n\in Y_0$ then  $W_{\alpha(n)}$ is finite, so is $\varphi_{f(n)}$  and $f(n)\in A_0$.
If $n\not\in Y_0$ then there are two cases:

\noindent $1)$  $W_{\alpha(n)}$ is $\omega$, by construction, $\varphi_{f(n)}=F_n$ and $F_n$ is total, so  $f(n)\not\in A_0$.

\noindent $2)$ $\pi_n \not\subseteq \{0,1,2  \}$. Then for some $ j\in\omega$  $F_n (j) > 2$, so $ \pi_{f(n)} \not\subseteq \{0,1,2  \}$. Again  $f(n) \not\in A_0$.
 {\bf So we have $f^{-1}(A_0)=Y_0$}.


If $n\in Y_1\setminus Y_0$ then $\varphi_{\alpha(n)}$ is total and $\varphi_{\alpha(n)}=^* 0$. In this case $\varphi_{f(n)}=F_n$ and by construction $F_n\in \AC$ since $(\exists s_1\in \omega)(\forall s\geq s_1)\, I^n_s=I^n_{s_1}$. So $\varphi_{f(n)}=F_n$ and $f(n)\in A_1\setminus A_0$.


If $n\not\in Y_2$ then $\varphi_{\alpha(n)}$ is total and $\varphi_{f(n)}=F_n$. By construction the case $\mathcal{F}_{H(t_s(n))}(s+1)=<i,2>+1$ arises infinitely often and the collection of the corresponding numbers $i$ is infinite too.
 So, for infinitely many $i$,
either
${\rm im}(F_n)\not\subseteq \{0,1,2\}$ or
$\overline{F_n}\not\in \{\overline{\mathcal{F}_0},\dots,\overline{\mathcal{F}_i}\}$. Hence either ${\rm im}(F_n)\not\subseteq \{0,1,2\}$ or  $\overline{F_n}\not\in \KR$. That means $f(n)\not\in A_2$.

%
If $n\in Y_2\setminus Y_1$ then $\varphi_{\alpha(n)}=F_n$ is total. By the choice of $n$,
$(\exists N)(\forall i\geq N)\, \varphi_{\alpha(n)}(i)\neq 2$. Hence $(\exists^\infty i)\varphi_{\alpha(n)}(i)= 1$.

Let us note that after some step $s_0$ for all $i$ we have $<i,2>\not\in W_{t_s(n)}$ for $s\geq s_0$. We define
$t_\infty(n)=t_{s_0}(n)$. It is easy to see that, for $s\geq s_0$, $t_\infty(n)=t_s(n)$.
On the step $s+1$, when $\mathcal{F}_{H(t_\infty(n))}(s+1)=<i,1>+1$,  $F^{s+1}_n(m^{s+1}_n)\neq F^{s+1}_n(m^{s+1}_n-1)$.
Hence $F_n\not\in \AC$, so $f(n)\not\in A_1$. {\bf So we have $f^{-1}(A_1)=Y_1$}.

Using $s_0$ and $N$ from above we explain that $F_n\in \K$.

 Let $m_n=m^{s_0+1}_{n}$.
 It is easy to see
 that the following functions  belongs to $K$:
 \begin{itemize}
 \item the characteristic function of the set $A=\{m_n\cdot 2^i\mid i\geq 0\}$;
\item  the function $g(x)$, that computes $\max\{y\in A\mid y\leq x\}$ for $x\geq m$ and for $x< m$ it is equal to $0$;
\item  the function $S(x)=\mu(s')(F^{s'}_n(x)\downarrow)$.
 \end{itemize}
 In order to meet our goal we construct the function $I(x)=I^{S(x)-1}_n$ by the following rules:

 Assume $I_0=I^{s_0}_n$. Then we define
  \begin{itemize}
  \item for $x<m_n$, $I(x)=0$,
  \item for $x=m_n$, $I(x)= I_0$,
  \item for $x>m_n$ and $g(x)>m_n$,
\small{
   \begin{eqnarray*}
I(x)=\left \{
\begin{array}{rl}
&l(\mathcal{F}_{H(t_\infty(n))}(S(x)-1)-1)\leftrightharpoons i \mbox{ if }\\
 &I([\frac{x}{2}])\leq i\leq S(x)-1\wedge r(\mathcal{F}_{H(t_\infty(n))}(S(x)-1)-1)=1\\
&I([\frac{x}{2}]) \mbox{ otherwise . }
\end{array}
\right.
\end{eqnarray*}
}
  \end{itemize}
   \item for $x>m_n$ and $g(x)=m_n$, $I(x)= I_0$.

   From above we can see that $\lambda x.I(x)\in \K$.


We can assume that $x\geq N$ and $S(x)>s_0$.

Suppose $x\in A$.

 If $\mathcal{F}_{H(t_\infty(n))}(S(x))=<i,0>+1$ then $F_n(x)=F_n([\frac{x}{2}])$.
 The same is done if  $\mathcal{F}_{H(t_\infty(n))}(S(x))=<i,1>+1$ but $i\leq I(x)$ or $i>S(x)$.

 Otherwise, i.e., if  $\mathcal{F}_{H(t_\infty(n))}(S(x))=<i,1>+1$ and $I(x)\leq i\leq S(x)$ then for the value of $F_n(x)$ we chose the first one from $\{0,1\}$ which differs from $F_n([\frac{x}{2}])$.

Suppose $x\not\not\in A$.

 If $\mathcal{F}_{H(t_\infty(n))}(S([\frac{x}{2}]+1)=<i,0>+1 \mbox{or } 0$ then $F_n(x)=F_n([\frac{x}{2}])$.
 The same is done if $\mathcal{F}_{H(t_\infty(n))}(S([\frac{x}{2}]+1)=<i,0>+1$ but $i\leq I([\frac{x}{2}]+1)$ or
  $i>S([\frac{x}{2}]+1)$.
  Otherwise, i.e., if  $\mathcal{F}_{H(t_\infty(n))}(S([\frac{x}{2}]))=<i,1>+1$ and $I([\frac{x}{2}])\leq i\leq S([\frac{x}{2}])$ %
  then for the value of $F_n(x)$ we chose the first one from $\{0,1\}$ which differs from $F_n([\frac{x}{2}])$.
 Therefore the scheme above shows that $F_n\in\K$. {\bf So we have $f^{-1}(A_2)=Y_2$}.

\end{proof}

\begin{corollary}
The structures of $\Pt$-numbers, $\SPt$-numbers and $\mathcal{E}_n$-numbers, $n\geq 2$ do not have computable copies.
\end{corollary}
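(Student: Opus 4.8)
The plan is to read the Corollary off as three instances of the Theorem of Section~\ref{without_constr}: for each of the three structures it is enough to exhibit a class $K$ of total computable numerical functions that generates it in the sense of Definition~\ref{str} and that meets requirements~(1)--(4) of Section~\ref{without_constr}. For the $\Pt$-numbers I take $K=K_\Pt$, the class of functions computed in time polynomial in the magnitude of the argument introduced in Section~\ref{index_sets_constructivisiruemih_kr}, where it was observed that $\widetilde{K_\Pt}$ is exactly the set of $\Pt$-numbers in $[-1,1]$. For the $\mathcal{E}_n$-numbers with $n\geq 3$ I take $K=\mathcal{E}_n$ itself (it contains $\lambda x.2^x$, so these are genuine $\mathcal{E}_n$-numbers), while for $n=2$ I take $K=\mathrm{DSpace}(n)=\mathcal{E}_2$, which by the discussion in Section~\ref{index_sets_constructivisiruemih_kr} generates precisely the $\mathcal{E}_2$-numbers. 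For the $\SPt$-numbers I take $K=\SPt$, so that $\widetilde{\SPt}=\widetilde{\Pt}$ by construction.

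Next I would verify requirements~(1)--(4) for each of these classes. Requirement~(1) is the routine check that $+,\dotminus,\cdot,[\frac{x}{2}],[\sqrt x]$ all lie in the class. Requirement~(2) holds because each of these classes carries a standard effective indexing of its unary members (equivalently, a computable universal function for them in the ordinary sense). For requirement~(3), given an index $n$ one sets $\mathcal{F}_{H(n)}$ to be the function that on input $m$ decodes $m=\langle k,j\rangle$ (Cantor pairing is available by Remark~\ref{r_2}), runs $\varphi_n$ on $k$ for a number of steps that is monotone and unbounded in $j$ yet small enough to remain inside the resource bound of $K$ (a budget of order $j$ for $\Pt$, of order $\log_2 j$ for $\mathcal{E}_2$, of order $\log_2\log_2 j$ for $\SPt$), and outputs $k+1$ if that computation has already halted and $0$ otherwise; by closure under superposition this function is in $K$, its range is $\{0\}\cup\{x+1\mid x\in W_n\}$, and since $\varphi_n(i)$ halts within every sufficiently large budget, $\mathcal{F}^{-1}_{H(n)}(i+1)$ is infinite for each $i\in W_n$. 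For requirement~(4), $\Pt$ and $\mathcal{E}_n$ for $n\geq 2$ are closed under the ordinary bounded primitive recursion scheme (a classical fact for Grzegorczyk classes, and for $K_\Pt$ noted in Section~\ref{index_sets_constructivisiruemih_kr}), whereas $\SPt$ is closed under recursion scheme~2, whose recursive call at $[\frac{y}{2}]$ uses only $\log_2 y$ nested invocations and which was admitted in requirement~(4) precisely for this kind of very thin class. Once (1)--(4) are checked, the Theorem of Section~\ref{without_constr} yields that each of the structures $\KR$ has no computable copy.

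The main obstacle is requirement~(3) --- and, secondarily, closure under recursion scheme~2 --- for the extremely thin class $\SPt$: one must arrange the padded encoding, the step counting, the partial simulation of $\varphi_n$ and the emission of the output so that the whole computation fits inside a doubly logarithmic bound while the available budget still grows without bound along each fibre $\langle k,\cdot\rangle$; the analogous checks for $\Pt$ and for the $\mathcal{E}_n$ are comparatively routine. After (1)--(4) the conclusion is automatic: the Theorem of Section~\ref{without_constr} combines Theorem~\ref{sigma02construct} with the Proposition just proved, $(Y_0,Y_1,Y_2)\leq_m(A_0,A_1,A_2)$, and with the $m$-completeness of $(Y_0,Y_1,Y_2)$ among $3$-tuples of $\Sigma^0_2$-sets with the embedding property (Section~\ref{index_sets}); since $A_0\subseteq A_1\subseteq A_2$, this makes $(A_0,A_1,A_2)$ likewise $m$-complete in that class, so $A_0\cup(A_2\setminus A_1)\notin\Sigma^0_2$, and then the contrapositive of Theorem~\ref{sigma02construct} shows that $\KR$ has no computable presentation.
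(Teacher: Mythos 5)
Your proposal follows exactly the route the paper intends: the corollary is stated there without proof, as an immediate instance of the theorem of Section~\ref{without_constr}, and your work consists in naming the generating classes ($\K_\Pt$, $\SPt$, $\mathcal{E}_n$ resp.\ ${\rm DSpace}(n)$ for $n=2$) and checking requirements (1)--(4), which is precisely what the paper leaves implicit. The only place where your argument (like the paper's) remains a sketch is the verification of requirements (1) and (3) for the very thin class $\SPt$, which you correctly single out as the delicate point.
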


\begin{corollary}
The fields of $\Pt$-numbers and $\mathcal{E}_n$-numbers, $n\geq 3$ do not have computable copies.
\end{corollary}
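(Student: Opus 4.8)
The plan is to reduce the corollary to the preceding results on \emph{structures}, using that the structure $\KR=(\KR,0,Q^3_+,Q^3_-)$ of Definition~\ref{str} is a definitional reduct of the corresponding ordered field, so that a computable copy of the field would yield one of that structure.

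First I would pin down the two fields precisely. By the analysis in Section~\ref{index_sets_constructivisiruemih_kr} the field of $\Pt$-numbers is $\Pt_{CF}=\widetilde{\K_\Pt}$, where $\K_\Pt$ is the class of functions computable in polynomial time measured in the \emph{value} of the argument; since $\K_\Pt$ contains $\BF$ together with $+,\cdot$, is closed under composition, and is closed under the bounded primitive recursion scheme, the Proposition in Section~\ref{rcfield_sec} shows $\Pt_{CF}$ is a real closed field. For $n\geq 3$ we have $\lambda x.2^x\in\mathcal{E}_n$, so the set of $\mathcal{E}_n$-numbers is exactly $\widetilde{\mathcal{E}}_n$, and the same Proposition applies (the rich class $\mathcal{E}_n$ again contains $\BF$, $+$, $\cdot$ and is closed under composition and bounded primitive recursion), so the field of $\mathcal{E}_n$-numbers is real closed as well. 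Moreover $\K_\Pt$ and each $\mathcal{E}_n$ with $n\geq 3$ satisfy requirements (1)--(4) of Section~\ref{without_constr}, so the Theorem of that section — equivalently, the preceding corollary — tells us that for $\K\in\{\K_\Pt\}\cup\{\mathcal{E}_n:n\geq 3\}$ the structure $\KR$ of Definition~\ref{str} has no computable presentation.

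Next I would run the reduct argument. Fix one of these ordered fields $F$ (so $F=\KR$ as a set for the appropriate $\K$) and suppose, towards a contradiction, that $\nu:\omega\to F$ is a computable copy of $(F,+,\cdot,\leq)$. Then $\nu$-equality, the graph of $+$, and the relation $\leq$ are all $\nu$-computable, hence so are the ternary relations $Q^3_+(x,y,z)\leftrightarrow x+y\leq z$ and $Q^3_-(x,y,z)\leftrightarrow x+y\geq z$; and searching with computable equality produces a $\nu$-index of $0$. Thus $\nu$ is simultaneously a computable copy of $(F,0,Q^3_+,Q^3_-)$, i.e.\ of the structure $\KR$ in the sense of Definition~\ref{str}, which contradicts the previous paragraph. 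Hence neither the field of $\Pt$-numbers nor the field of $\mathcal{E}_n$-numbers with $n\geq 3$ admits a computable copy.

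I do not expect a genuine obstacle here: all the mathematical weight already sits in the Theorem of Section~\ref{without_constr}, and what is added is only the soft observation that an ordered field is a richer structure than $\KR$. The one point that needs care — bookkeeping rather than difficulty — is checking that $\K_\Pt$ and the $\mathcal{E}_n$ with $n\geq 3$ really meet all of requirements (1)--(4) of Section~\ref{without_constr}, in particular closure under recursion scheme~2 (or the standard bounded recursion, which for $\K_\Pt$ was already noted in Section~\ref{index_sets_constructivisiruemih_kr}) and the availability of the effective enumeration $\{\mathcal{F}_n\}$ together with the function $H$ of item~(3); for the Grzegorczyk classes these are classical facts.
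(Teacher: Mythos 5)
Your proposal is correct and follows essentially the same route as the paper: the paper's one-line proof ("the claim follows from real closedness of the corresponding fields") is exactly your argument that these sets of reals do form real closed fields (via the Proposition of Section~\ref{rcfield_sec}) on the same underlying set as the structures $\KR$, so a computable copy of the field would restrict to a computable copy of the reduct $(\KR,0,Q^3_{+},Q^3_{-})$, contradicting the preceding corollary. You merely make explicit the bookkeeping (requirements (1)--(4), the reduct observation) that the paper leaves implicit.
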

The claim follows from real closedness of the corresponding fields.
\bigskip

\end{document}